 \newtheorem{theorem}{Theorem}[section]
 \newtheorem{corollary}[theorem]{Corollary}
 \newtheorem{lemma}[theorem]{Lemma}
 \newtheorem{proposition}[theorem]{Proposition}
 \theoremstyle{definition}
 \newtheorem{definition}[theorem]{Definition}
 \theoremstyle{remark}
 \newtheorem{example}[theorem]{Example}
 \numberwithin{equation}{section}
\begin{document}
\title[Geometry of $\mathcal{P}\mathcal{R}$-semi-invariant warped product submanifolds]
{Geometry of $\mathcal{P}\mathcal{R}$-semi-invariant warped product submanifolds in paracosymplectic manifold}
\author[S. K. Srivastava] {S. K. Srivastava}
\address{Department of Mathematics,\br
                   Central University of Himachal Pradesh,\br
                   Dharamshala-176215,\br
                   Himachal Pradesh,\br
	       INDIA.}
\email{sachin@cuhimachal.ac.in}
\author{A. Sharma}
\address{Department of Mathematics,\br
                   Central University of Himachal Pradesh,\br
                   Dharamshala-176215,\br
                   Himachal Pradesh,\br
	       INDIA.}
\email{cuhp13rdmath01@cuhimachal.ac.in}
\thanks {S. K. Srivastava: partially supported through the UGC-BSR Start-Up-Grant vide
their letter no. F.30-29/2014(BSR). A. Sharma: supported by Central University of Himachal Pradesh through the Research fellowship for Ph.D.} 

\begin{abstract}
The purpose of this paper is to study $\mathcal{P}\mathcal{R}$-semi-invariant warped product submanifolds of a paracosymplectic manifold $\widetilde{M}$. We prove that the distributions associated with the definition of $\mathcal{P}\mathcal{R}$-semi-invariant warped product submanifold $M$ are always integrable. A necessary and sufficient condition for an isometrically immersed $\mathcal{P}\mathcal{R}$-semi-invariant submanifold of $\widetilde{M}$ to be a $\mathcal{P}\mathcal{R}$-semi-invariant warped product submanifold is obtained in terms of the shape operator.  
\end{abstract}
\subjclass{53B25, 53B30, 53C25, 53D15}
\keywords{Paracontact manifold, Submanifold, Warped product}
\maketitle
\section{Introduction}\label{intro}
The concept of warped product (or, more generally warped bundle) is one of the most effective generalizations of pseudo-Riemannian  products \cite{OB}. The premise of warped product has perceived several important contributions in complex and contact Riemannian (or pseudo-Riemannian) geometries, and has been successfully applied in Hamiltonian spaces, general relativity and black holes (c.f., \cite{HM,JC, JCH, ST}). 

The study of warped product was initiated by Bishop-Neill \cite{RB}. However, the consideration has attained momentum when Chen introduced the notion of CR­-warped product in Kaehlerian manifold $\widetilde{N}$ and proved the non-existence of proper warped product CR-submanifolds in the form $N_{T} \times {}_fN_{\bot}$ such that $N_{T}$ is a holomorphic submanifold and $N_{\bot}$ is a totally real submanifold of $\widetilde{N}$ \cite{BY}. Subsequently, Hasegawa-Mihai \cite{IH} and Munteanu \cite{MI} continued the study for Sasakian manifold that can be viewed as an odd-dimensional analogue of $\widetilde{N}$. Further, several geometers have studied the existence and non-existence of warped product submanifolds in almost contact and Lorentzian manifolds (c.f., \cite{BYC, KA, KHAN, SY, UDDIN}). Recently in \cite{CPR}, Chen-Munteanu brought our attention to the geometry of $\mathcal{P}\mathcal{R}$-warped products in para-K\"{a}hler manifolds and obtained some basic results on such submanifolds. 

This paper is organized as follows. In Sect. \ref{pre}, the basic information about almost paracontact metric manifolds, paracosymplectic manifolds and submanifolds is given. In Sect. \ref{wps}, we proved the non-existence of a proper warped product submanifold of a paracosymplectic manifold $\widetilde{M}$ in the form $B\times_{f} F$ such that the characteristic vector field $\xi$ is tangent to $F$, where $f$ is a warping function. In Sect. \ref{pr}, we study $\mathcal{P}\mathcal{R}$-semi-invariant warped product submanifolds of $\widetilde{M}$ and found the distributions concerned with the definition of $\mathcal{P}\mathcal{R}$-semi-invariant submanifold $M$ are integrable. Further, we obtained a necessary and sufficient condition for an isometrically immersed submanifold $M$ of $\widetilde{M}$ to be a $\mathcal{P}\mathcal{R}$-semi-invariant warped product submanifold. Finally, we gave an example of a $\mathcal{P}\mathcal{R}$-semi-invariant submanifold $F\times_{f} B$ of a paracosymplectic manifold in Sect. \ref{ex}.
\section{Preliminaries}$\label{pre}$
\subsection{Almost paracontact metric manifolds}
A $(2n + 1)$-dimensional $C^{\infty }$ manifold $\widetilde{M}$ has an  almost paracontact structure $(\phi ,\xi ,\eta )$, if it admits a tensor field $\phi$ of type (1, 1), a vector field $\xi$, and a $1$-form $\eta$ on  $\widetilde{M}$, satisfying conditions:
\begin{align}
  &\phi ^{2} =Id-\eta \otimes \xi ,\quad\eta \left(\xi \right)=1\label{GrindEQ__2_1_}
\end{align}
where $Id$ is the identity transformation and the tensor field $\phi$ induces an almost paracomplex structure on the distribution $D$ = ker($\eta$), that is the eigen distributions $D^{\pm}$ corresponding to the eigenvalues $\pm1$, have equal dimensions dim$D^{+}$= dim$D^{-}= n$.
From the equation (\ref{GrindEQ__2_1_}), it can be easily deduced that 
\begin{align}\label{phixi}
\phi\xi = 0, \quad \eta\circ\phi = 0 \quad {\rm and \quad rank}(\phi)=2n.
\end{align}
The manifold $\widetilde{M}$ is said to be an almost paracontact manifold if it is endowed with an almost paracontact structure (c.f., \cite{SK,ZA}).
If an almost paracontact manifold $\widetilde{M}$ admits a pseudo-Riemannian metric $g$ satisfying:
\begin{align}
g\left(X,Y\right)=-g\left(\phi X,\phi Y\right)+\eta \left(X\right)\,\eta \left(Y\right),\label{GrindEQ__2_2_}
\end{align}
where signature of $g$ is necessarily $(n+1,\,n)$ for any vector fields $X$ and $Y$; then the quadruple $(\phi,\xi,\eta,g)$ is called an almost paracontact metric structure and the manifold $\widetilde{M}$ equipped with paracontact metric structure is called an almost paracontact metric manifold. With respect to $g$, $\eta$ is metrically dual to $\xi$, that is
\begin{align}\label{gx}
g(X,\xi)=\eta(X). 
\end{align}
With the consequences of Eqs. \eqref{GrindEQ__2_1_} and \eqref{phixi}, Eq. (\ref{GrindEQ__2_2_}) implies that
\begin{align}\label{GrindEQ__2_3_}
g(\phi X,Y)=-g(X,\phi Y),
\end{align}
for any $X,Y\in \Gamma(T\widetilde{M})$; $\Gamma(T\widetilde{M})$ denote the sections of tangent bundle $T\widetilde M$ of $\widetilde M$, that is, the space of vector fields on $\widetilde M$. The fundamental $2$-form $\Phi$ on $\widetilde{M}$ is given by 
\begin{align}\label{PHI} g\left(X,\phi Y\right)=\Phi\left(X,Y\right). \end{align}
\noindent An almost paracontact metric structure-$(\phi,\xi,\eta,g)$ is para-Sasakian if and
only if 	
\begin{align}\label{paras}
(\widetilde\nabla_{X}\phi) Y=-g(X,Y)\xi+\eta(Y)X.
\end{align}
From \eqref{phixi}, \eqref{GrindEQ__2_2_} and \eqref{paras}, it can be easily deduced for a para-Sasakian manifold that
\begin{equation}\label{nablax}
\widetilde\nabla_{X}\xi = -\phi X,\quad \widetilde\nabla_{\xi}\xi=0.
\end{equation}
In particular, a para-Sasakian manifold is $K$-paracontact \cite{ZA}.
\begin{definition}
An almost paracontact metric manifold $\widetilde{M}(\phi, \xi, \eta, g)$ is said to be 
\begin{itemize}
\item[$\bullet$] paracosymplectic if the forms $\eta$ and $\Phi$ are parallel with respect to the Levi-Civita connection $\widetilde{\nabla }$ on $\widetilde{M}$, i.e., 
\begin{align}\widetilde{\nabla }\eta=0\quad{\rm and}\quad \widetilde{\nabla }\Phi=0.\label{GrindEQ__2_4_}\end{align}
\item[$\bullet$] an almost paracosymplectic if the forms $\eta$ and $\Phi$ are closed, i.e., $d\eta=0$ and $d\Phi=0$ (see \cite{PD, IK}).
\end{itemize}
\end{definition}
\noindent Now, we give an example of a paracosymplectic manifold:
\begin{example}\label{ex1}

\noindent We consider the $5$-dimensional manifold $\widetilde M=\mathbb{R}^4\times\mathbb{R}_{+}\subset\mathbb{R}^5$ with the standard Cartesian coordinates $(x_1,x_2,y_1,y_2,t)$. Define the structure $(\phi,\xi,\eta)$ on $\widetilde M$ by 
\begin{align}\label{strucpc}
\phi e_{1}=e_{3},\ \phi e_{2}=e_{4},\ \phi e_{3}=e_{1},\  
 \phi e_{4}=e_{2},\ \phi e_{5}=0,\ \xi=e_{5},\ \eta =dt,
\end{align}
where $e_{1} = \dfrac{\partial}{\partial x_1},\,e_{2} = \dfrac{\partial}{\partial x_2},\,e_{3} = \dfrac{\partial}{\partial y_1},\,e_{4} = \dfrac{\partial}{\partial y_2}$ and $e_{5} = \dfrac{\partial}{\partial t}$.
Consider $g$ to be the pseudo-Riemannian metric defined by
\begin{align}\label{metricex}
[g(e_{i},e_{j})]=\begin{bmatrix}
                         x^{2}&0&0&0&0\\
                         0&y^{2}&0&0&0\\
                         0&0&-x^{2}&0&0\\
                         0&0&0&-y^{2}&0\\
                         0&0&0&0&1
\end{bmatrix}.
\end{align}
Then by straightforward calculations, one verifies that the structure $(\phi,\xi,\eta,g)$ is an almost paracontact metric structure. For the Levi-Civita connection $\widetilde\nabla$ with respect to pseudo-Riemannian metric $g$, we obtain
\begin{align*}
\begin{matrix}
\widetilde\nabla_{e_{1}}{e_{1}}=\frac{1}{x}e_{1},  &\widetilde\nabla_{e_{1}}{e_{2}}=0, &\widetilde\nabla_{e_{1}}{e_{3}}=\frac{1}{x}e_{3}, &\widetilde\nabla_{e_{1}}{e_{4}}=0,  &\widetilde\nabla_{e_{1}}{e_{5}}=0, \\ 
\widetilde\nabla_{e_{2}}{e_{1}}=0, &\widetilde\nabla_{e_{2}}{e_{2}}=\frac{1}{y}e_{2}, &\widetilde\nabla_{e_{2}}{e_{3}}=0, &\widetilde\nabla_{e_{2}}{e_{4}}=\frac{1}{y}e_{4}, &\widetilde\nabla_{e_{2}}{e_{5}}=0,  \\
\widetilde\nabla_{e_{3}}{e_{1}}=\frac{1}{x}e_{3}, &\widetilde\nabla_{e_{3}}{e_{2}}=0, &\widetilde\nabla_{e_{3}}{e_{3}}=\frac{1}{x}e_{1}, &\widetilde\nabla_{e_{3}}{e_{4}}=0, &\widetilde\nabla_{e_{3}}{e_{5}}=0, \\
\widetilde\nabla_{e_{4}}{e_{1}}=0, &\widetilde\nabla_{e_{4}}{e_{2}}=\frac{1}{y}e_{4}, &\widetilde\nabla_{e_{4}}{e_{3}}=0, &\widetilde\nabla_{e_{4}}{e_{4}}=\frac{1}{y}e_{2}, &\widetilde\nabla_{e_{4}}{e_{5}}=0, \\
\widetilde\nabla_{e_{5}}{e_{1}}=0, &\widetilde\nabla_{e_{5}}{e_{2}}=0, &\widetilde\nabla_{e_{5}}{e_{3}}=0, &\widetilde\nabla_{e_{5}}{e_{4}}=0, &\widetilde\nabla_{e_{5}}{e_{5}}=0. 
\end{matrix}
\end{align*}
From the above computations, it can be easily seen that $\widetilde{M}(\phi,\xi,\eta,g)$ is a paracosymplectic manifold.\end{example}

\subsection{Geometry of submanifolds}
Let $M$ be a submanifold immersed in a $(2n + 1)$-dimensional almost paracontact manifold $\widetilde{M}$; we denote by the same symbol $g$ the induced metric on $M$. Let $\Gamma (TM^{\bot })$ denote the set of vector fields
normal to $M$ and $\Gamma (TM)$ the sections of tangent bundle $TM$ of $M$ then Gauss and Weingarten formulas are given by, respectively,
\begin{align} \widetilde{\nabla }_{X} Y&=\nabla _{X} Y+h(X,Y), \label{GrindEQ__2_5_}\\
\widetilde{\nabla }_{X} \zeta &=-A_{\zeta} X+\nabla _{X}^{\bot } \zeta \label{GrindEQ__2_6_}
\end{align}
for any $X,Y\in \Gamma (TM)$ and $\zeta \in \Gamma (TM^{\bot })$, where $\nabla$ is the induced connection, $\nabla ^{\bot }$ is the normal connection on the normal bundle $TM^{\bot }$, $h$ is the second fundamental form, and the shape operator $A_{\zeta}$ associated with the normal section $\zeta$ is given by
\begin{align} \label{GrindEQ__2_7_} g\left(A_{\zeta} X,Y\right)=g\left(h(X,Y),\zeta\right).\end{align}
The mean curvature vector $H$ of $M$ is given by $H = \frac{1}{n}{\rm trace}\,(h)$. A pseudo-Riemannian submanifold $M$ is said to be \cite{CPR} 
\begin{itemize}
\item[$\bullet$] {\it totally geodesic} if its second fundamental form vanishes identically. 
\item[$\bullet$] {\it umbilical} in the direction of a normal vector field $\zeta$ on $M$, if $A_{\zeta} = \lambda Id$, for certain function $\lambda$ on $M$; here $\zeta$ is called a umbilical section. 
\item[$\bullet$] {\it totally umbilical} if $M$ is umbilical with respect to every (local) normal vector field. 
\item[$\bullet$] {\it minimal} if the mean curvature vector $H$ vanishes identically. 
\item[$\bullet$] {\it quasi-minimal} if $H$ is a light-like vector field.
\end{itemize}
Consider that $M$ is an isometrically immersed submanifold of an almost paracontact metric manifold $\widetilde{M}$. For any  
$X\in \Gamma (TM)$ and $N\in \Gamma(TM^{\bot })$, if we write 
\begin{align} \phi X&=tX+nX,\label{GrindEQ__2_8_}\\
\phi N&=t'N+n'N, \label{GrindEQ__2_9_}
\end{align}
where $tX$ (resp., $nX$) is tangential (resp., normal) part of $\phi X$ and $t'N$ (resp., $n'N$) is tangential (resp., normal) part of $\phi N$. Then the submanifold $M$ is said to be {\it invariant} if $n$ is identically zero and {\it anti-invariant} if $t$ is identically zero. From Eqs. $\eqref{GrindEQ__2_3_}$ and $\eqref{GrindEQ__2_8_}$, we obtain that 
\begin{align} \label{GrindEQ__2_10_}
g(X,tY)=-g(tX,Y). 
\end{align}
\noindent Let $M$ be an immersed submanifold of a paracosymplectic manifold $\widetilde{M}$ then for any $X,Y\in\Gamma(TM)$ we obtain by use of Eqs. \eqref{GrindEQ__2_4_}, \eqref{GrindEQ__2_5_}, \eqref{GrindEQ__2_6_} and \eqref{GrindEQ__2_7_} that
\begin{align}
 \left(\nabla _{X} t\right)Y&= A_{nY}X + t'h(X,Y),\label{GrindEQ__2_16_}\\
\left(\nabla _{X} n\right)Y &= n'h(X,Y) - h(X,tY),\label{GrindEQ__2_17_}
\end{align}
where the covariant derivatives of the tensor fields $t$ and $n$ are, respectively, defined by
\begin{align}
(\nabla _{X} t)Y &=\nabla _{X} tY-t\nabla_{X}Y,\label{GrindEQ__2_11_} \\
(\nabla _{X} n)Y &=\nabla _{X}^{\bot}nY-n\nabla _{X}Y. \label{GrindEQ__2_12_}
\end{align}
\noindent The canonical structure $t$ and $n$ on a submanifold $M$ are said to be \textit{parallel} if  $\nabla t=0$ and ${\nabla }n=0$, respectively. 
From Eqs. \eqref{GrindEQ__2_4_} and \eqref{GrindEQ__2_5_}, we can easily prove the following lemma for later use:
\begin{lemma}\label{lem__3_2_}
Let $M$ be an immersed submanifold of a paracosymplectic manifold $\widetilde{M}(\phi,\xi,\eta,g)$ such that $\xi$ is tangent to $M$. Then for any $X \in \Gamma(TM)$, we have
 \begin{align}
  &\nabla_{X}\xi =0,\label{GrindEQ__3_2_}\\
&h(X,\xi)=0.\label{GrindEQ__3_3_}
 \end{align}
\end{lemma}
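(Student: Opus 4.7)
The plan is to show first that the ambient covariant derivative $\widetilde{\nabla}_{X}\xi$ vanishes on $\widetilde{M}$, and then split this identity into its tangential and normal components via the Gauss formula \eqref{GrindEQ__2_5_} to extract the two desired equations.

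For the first step, I would start from the paracosymplectic condition $\widetilde{\nabla}\eta=0$ in \eqref{GrindEQ__2_4_}. Unpacking this, for any $X,Y\in\Gamma(T\widetilde{M})$ we have
\[
0=(\widetilde{\nabla}_{X}\eta)(Y)=X(\eta(Y))-\eta(\widetilde{\nabla}_{X}Y).
\]
Using \eqref{gx} to rewrite $\eta(\cdot)=g(\cdot,\xi)$ and the fact that $\widetilde{\nabla}$ is metric, this expands to
\[
0=g(\widetilde{\nabla}_{X}Y,\xi)+g(Y,\widetilde{\nabla}_{X}\xi)-g(\widetilde{\nabla}_{X}Y,\xi)=g(Y,\widetilde{\nabla}_{X}\xi).
\]
Since $Y$ is arbitrary and $g$ is non-degenerate, we conclude $\widetilde{\nabla}_{X}\xi=0$.

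For the second step, assume $\xi$ is tangent to $M$ and take $X\in\Gamma(TM)$. Applying the Gauss formula \eqref{GrindEQ__2_5_} to the pair $(X,\xi)$ gives
\[
0=\widetilde{\nabla}_{X}\xi=\nabla_{X}\xi+h(X,\xi).
\]
Here $\nabla_{X}\xi\in\Gamma(TM)$ while $h(X,\xi)\in\Gamma(TM^{\bot})$, so the decomposition of the zero vector into tangential and normal parts forces both terms to vanish, yielding \eqref{GrindEQ__3_2_} and \eqref{GrindEQ__3_3_} simultaneously.

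There is really no main obstacle here; the argument is a two-line computation once one recognizes that $\widetilde{\nabla}\eta=0$ is equivalent to $\widetilde{\nabla}\xi=0$ (via metric compatibility), after which the Gauss decomposition finishes the proof. The only thing to be slightly careful about is to invoke metric compatibility of the Levi-Civita connection explicitly rather than the $\Phi$-parallelism, since the latter is not needed for this particular lemma.
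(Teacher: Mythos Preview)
Your proof is correct and follows exactly the route indicated in the paper: the lemma is stated there as an easy consequence of Eqs.~\eqref{GrindEQ__2_4_} and \eqref{GrindEQ__2_5_}, and you have simply written out those two steps (first converting $\widetilde{\nabla}\eta=0$ into $\widetilde{\nabla}_{X}\xi=0$ via metric compatibility and \eqref{gx}, then decomposing with the Gauss formula). There is nothing to add.
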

\section{Warped product submanifolds}\label{wps}
\noindent Let $\left(B,g_{B} \right)$ and $\left(F ,g_{F} \right)$ be two pseudo-Riemannian manifolds and ${f}$ be a positive smooth function on $B$. Consider the product manifold $B\times F$ with canonical projections 
\begin{align}\label{cp}
\pi:B \times F\to B\quad{\rm and}\quad \sigma:B \times F\to F.
\end{align}
Then the manifold $M=B \times_{f} F $ is said to be \textit{warped product} if it is equipped with the following warped metric
\begin{align}\label{wmetric}
g(X,Y)=g_{B}\left(\pi_{\ast}(X),\pi_{\ast}(Y)\right) +(f\circ\pi)^{2}g_{F}\left(\sigma_{\ast}(X),\sigma_{\ast}(Y)\right)
\end{align}
for all $X,Y\in \Gamma(TM)$ and `$\ast$' stands for derivation map, or equivalently,
\begin{align}
g=g_{B} +f^{2} g_{F}.
\end{align}
The function $f$ is called {\it the warping function} and a warped product manifold $M$ is said to be {\it trivial} if $f$ is constant \cite{RB}.

Now, we recall the following proposition for the warped product manifolds \cite{RB}:

\begin{proposition}$\label{prop__3_1_}$
For $X, Y \in \Gamma(TB)$ and $U, V \in \Gamma(TF)$, we obtain on warped product manifold $M=B\times_{f} F$ that
\begin{itemize}
\item[(1)] 	$\nabla _{X}Y \in \Gamma(TB),$
\item[(2)]	$\nabla _{X}U =\nabla _{U}X=X(\ln{f})U,$
\item[(3)]	$\nabla _{U}V =\nabla^{'}_{U}V-g(U,V)grad(\ln{f}),$
\end{itemize}
where $\nabla$ and $\nabla^{'}$ denotes the Levi-Civita connections on $M$ and $F$ respectively.
\end{proposition}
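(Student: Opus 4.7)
The plan is to prove all three formulas by direct application of the Koszul formula
\[
2g(\nabla_{A}B,C) = A\,g(B,C) + B\,g(A,C) - C\,g(A,B) + g([A,B],C) - g([A,C],B) - g([B,C],A),
\]
combined with two basic properties of the warped product structure. First, if $X,Y$ are horizontal lifts of fields on $B$ and $U,V$ are vertical lifts of fields on $F$, then $[X,U]=0$ and more generally every mixed bracket vanishes. Second, from $g = \pi^{*}g_{B} + (f\circ\pi)^{2}\sigma^{*}g_{F}$, a horizontal lift $X$ annihilates the function $g_{B}(Y,Z)\circ\pi$ only when acting trivially on $F$-coordinates, but crucially $X(g(U,V)) = X(f^{2})\,g_{F}(U,V) = 2f\,X(f)\,g_{F}(U,V)$, while a vertical lift $U$ satisfies $U(g(X,Y)) = 0$ and $U(f) = 0$. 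With these two ingredients, each of (1)--(3) reduces to bookkeeping.

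For (1), the task is to verify that $\nabla_{X}Y$ has no $F$-component, i.e.\ that $g(\nabla_{X}Y,U)=0$ for all $U\in\Gamma(TF)$. Applying Koszul with $A=X$, $B=Y$, $C=U$, all brackets drop out (mixed brackets vanish), and the three $A\,g(B,C)$ terms vanish either because $U(g(X,Y))=0$ or because $g(X,U)=g(Y,U)=0$. Hence $\nabla_{X}Y$ is tangent to $B$, and its projection to $B$ is manifestly the Levi-Civita connection of $g_{B}$.

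For (2), I first note that $[X,U]=0$ gives $\nabla_{X}U-\nabla_{U}X=0$, so only one of the two equalities needs computation. Then I test $\nabla_{X}U$ against a vertical $V$ and against a horizontal $Y$ using Koszul. Against $Y\in\Gamma(TB)$ everything collapses to zero by the same parity arguments as in (1), so $\nabla_{X}U$ is vertical. Against $V\in\Gamma(TF)$, the only surviving Koszul term is $X\,g(U,V) = 2f\,X(f)\,g_{F}(U,V)$, which equals $2\,X(\ln f)\,g(U,V)$; dividing by $2$ yields $\nabla_{X}U = X(\ln f)\,U$.

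For (3), I decompose $\nabla_{U}V$ into its horizontal and vertical parts. Pairing with $W\in\Gamma(TF)$ in Koszul, the only surviving $f$-terms are $f^{2}$ factors that cancel against the $g_{F}$-metric, and the answer reduces to the Koszul formula for $\nabla'_{U}V$ on $(F,g_{F})$, so $\sigma_{*}(\nabla_{U}V) = \nabla'_{U}V$. Pairing with $X\in\Gamma(TB)$ in Koszul, only $-X\,g(U,V) = -2f\,X(f)\,g_{F}(U,V)$ survives, giving $g(\nabla_{U}V,X) = -g(U,V)\,X(\ln f) = -g(U,V)\,g(\mathrm{grad}(\ln f),X)$. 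Adding the two components yields formula (3). The only nonroutine point throughout is keeping straight which Lie-bracket and which metric term survives under the mixed horizontal/vertical pairings; once the two book-keeping rules above are in hand, no step is more than a line.
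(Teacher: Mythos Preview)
Your proof is correct. The paper does not actually prove this proposition; it merely recalls it as a known result from Bishop--O'Neill \cite{RB}, so there is no in-paper argument to compare against. The Koszul-formula computation you outline is precisely the standard route (as in \cite{RB} or \cite{OB}), and each step checks out: the vanishing of mixed Lie brackets of lifted fields, the orthogonality $g(X,U)=0$, and the identities $U(f)=0$, $U\bigl(g_{B}(X,Y)\bigr)=0$, $X\bigl(g(U,V)\bigr)=2fX(f)\,g_{F}(U,V)$ are exactly what drive the three formulas.
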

\noindent For a warped product $M=B \times_{f} F$; $B$ is totally geodesic and $F$ is totally umbilical in $M$ \cite{RB}.

In \cite{PE}, Ehrlich introduced a notion of doubly warped product to generalize the warped product. Let us consider the product manifold $B\times F$ with canonical projections given by \eqref{cp}. Then a doubly warped product of pseudo-Riemannian manifolds of $\left(B,g_{B} \right)$ and $\left(F ,g_{F} \right)$ with smooth warping functions $f_{1}:B\to (0,\infty)$ and $f_{2}:F\to (0,\infty)$ is a manifold ${}_{f_{2} } B \times_{f_{1} } F$ endowed with the following doubly warped metric

\begin{align}\label{dwmetric}
g(X,Y)=(f_{2}\circ\sigma)^{2}g_{B}\left(\pi_{\ast}(X),\pi_{\ast}(Y)\right) +(f_{1}\circ\pi)^{2}g_{F}\left(\sigma_{\ast}(X),\sigma_{\ast}(Y)\right)
\end{align}
for all $X,Y\in \Gamma(TM)$, or equivalently,
\begin{align}
g=f_{2} ^{2} g_{B} +f_{1} ^{2} g_{F}.
\end{align}
If either $f_1 = 1$ or $f_2 = 1$, but not both, then ${}_{f_{2} } B \times_{f_{1} } F$ becomes a warped product. If $f_1 =f_2= 1$, then we have a product manifold. If neither $f_1 $ nor $f_2 $ is constant, then we obtain a proper (non trivial) doubly warped product manifold (see also \cite{SY, BU}).
In this case formula $(2)$ of proposition \ref{prop__3_1_} is generalized as 
\begin{align} \label{GrindEQ__3_1_}\nabla _{X} V=\left(X\ln f_{1} \right)V+\left(V\ln f_{2} \right)X
\end{align}
for each $X\in \Gamma(TB) $ and $V \in \Gamma(TF)$ \cite{MI}. For the proper doubly warped product manifold $M={}_{f_{2} } B \times_{f_{1} } F$, we have from \cite{BU} that the: 
\begin{itemize}
\item[(i)] leaves $B\times \{q\}$ and the fibers $\{p\}\times F$ of $M$ are totally umbilical and
\item[(ii)] leaf $B\times \{q\}$ (resp., fiber $\{p\}\times F$) is totally geodesic if ${\rm grad}_{F}(f_{2})\vert_{q}=0$ (resp., ${\rm grad}_{B}(f_{1})\vert_{p}=0$). 
\end{itemize} 
Presently we will prove the following theorem:
\begin{theorem}$\label{theorem__3_5_}$
There do not exist a proper warped product submanifold $M=B \times_{f} F$ of a paracosymplectic manifold $\widetilde{M}(\phi,\xi,\eta,g)$ such that $\xi$ have both $TB$ and $TF$ components.
\end{theorem}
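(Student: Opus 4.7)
The plan is to use Lemma \ref{lem__3_2_} together with the warped-product connection formulas in Proposition \ref{prop__3_1_} to show that the warping function must be constant, contradicting properness.

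First, since $\xi$ has nonzero components in both $TB$ and $TF$, it is tangent to $M$, so I would begin by writing $\xi = \xi_{1} + \xi_{2}$ with $\xi_{1}\in\Gamma(TB)$ and $\xi_{2}\in\Gamma(TF)$, both nonzero. Applying Lemma \ref{lem__3_2_} to the induced paracosymplectic structure on $M$ yields $\nabla_{X}\xi = 0$ for every $X\in\Gamma(TM)$. This identity, tested against vectors tangent to the base, is the hinge of the argument.

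Next I would take an arbitrary $X\in\Gamma(TB)$ and expand
\begin{equation*}
0 \;=\; \nabla_{X}\xi \;=\; \nabla_{X}\xi_{1} + \nabla_{X}\xi_{2}.
\end{equation*}
By Proposition \ref{prop__3_1_}(1), $\nabla_{X}\xi_{1}\in\Gamma(TB)$, and by Proposition \ref{prop__3_1_}(2), $\nabla_{X}\xi_{2} = X(\ln f)\,\xi_{2}\in\Gamma(TF)$. Splitting the identity above into its $TB$-component and $TF$-component, the fiber part gives $X(\ln f)\,\xi_{2}=0$. Since $\xi_{2}\neq 0$ by hypothesis, this forces $X(\ln f)=0$ for every $X\in\Gamma(TB)$. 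Because $f$ is a function on $B$ alone, this means $f$ is constant on $B$, contradicting the assumption that $M=B\times_{f}F$ is a proper warped product.

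I do not anticipate any serious obstacle: the only delicate point is justifying the decomposition $\xi=\xi_{1}+\xi_{2}$ as genuine sections of the (horizontal/vertical) distributions coming from the warped product structure, and ensuring that Proposition \ref{prop__3_1_} applies to them — but this is immediate from the splitting $TM = TB\oplus TF$ inherent in the warped product and the tensorial nature of the formulas in Proposition \ref{prop__3_1_}. For completeness, one could also test $\nabla_{U}\xi=0$ for $U\in\Gamma(TF)$ using Proposition \ref{prop__3_1_}(2)--(3), which yields the companion relation $\xi_{1}(\ln f)\,U = g(U,\xi_{2})\,\mathrm{grad}(\ln f) - \nabla'_{U}\xi_{2}$; however, this is not needed since the base-direction calculation already suffices to contradict properness.
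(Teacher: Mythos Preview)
Your proposal is correct and follows essentially the same approach as the paper: decompose $\xi=\xi_1+\xi_2$, invoke Lemma~\ref{lem__3_2_} to get $\nabla_X\xi=0$, and use Proposition~\ref{prop__3_1_} to extract $X(\ln f)\,\xi_2=0$. The only cosmetic difference is the direction of the contradiction---you assume $\xi_2\neq0$ and conclude $f$ is constant, while the paper assumes $f$ is non-constant and concludes $\xi_2=0$---and the paper additionally records the fiber-direction relation $g(Z,\xi_2)\,\mathrm{grad}(\ln f)=0$, which, as you note, is redundant.
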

\begin{proof}
For $\xi \in \Gamma(TM)$ we can write $\xi=\xi_1+\xi_2$, where $\xi_1 \in \Gamma(TB)$, $\xi_2 \in \Gamma(TF)$. Therefore, by the consequences of lemma \ref{lem__3_2_} and proposition \ref{prop__3_1_}, we obtain $X(\ln f)\xi_{2}=0,\,\forall X\in \Gamma(TB)$ and $g(Z,\xi_2)grad(\ln f)=0,\,\forall Z \in \Gamma(TF)$, both of which implies that $\xi_{2}=0$ since $f$ is not constant. This completes the proof of the theorem.
\end{proof}
\noindent Let $M={}_{f_{2} } B \times_{f_{1} }F$ be a doubly warped product submanifold of a paracosymplectic manifold $\widetilde{M}(\phi,\xi,\eta,g)$ such that $\xi \in  \Gamma(TM)$. If we consider $\xi \in \Gamma(TF)$. Then from Eqs. $\eqref{GrindEQ__3_1_}$ and $\eqref{GrindEQ__3_2_}$, we obtain that $\nabla _{X} \xi= X(\ln f_{1})\xi + \xi(\ln f_{2})X =0$. This reduced to the equation $X(\ln f_{1})=0, \forall$ $X \in \Gamma(TB)$ which yields $f_1$ is constant. Again if we take $\xi \in \Gamma(TB)$ then $ Z(\ln f_{2})=0,\forall Z \in \Gamma(TF)$ this implies that $f_2$ is constant.
Therefore, we can state the following proposition:
\begin{proposition}$\label{prop__3_3_}$
Let $M={}_{f_{2} } B \times_{f_{1} } F $ be a doubly warped product submanifold of a paracosymplectic manifold $\widetilde{M}(\phi,\xi,\eta,g)$. Then $f_1$ (resp., $f_2$) is constant if $\xi \in \Gamma(TF) $ (resp., $\xi \in \Gamma(TB) $). 
 \end{proposition}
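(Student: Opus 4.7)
The plan is to combine two pieces of information already established in the paper. First, Lemma~\ref{lem__3_2_} says that whenever $\xi$ is tangent to a submanifold of a paracosymplectic manifold $\widetilde{M}$, the induced connection satisfies $\nabla_{X}\xi=0$ for every $X\in\Gamma(TM)$. Second, the doubly warped product identity \eqref{GrindEQ__3_1_} expresses $\nabla_{X}V$ for mixed sections $X\in\Gamma(TB)$ and $V\in\Gamma(TF)$ as $(X\ln f_{1})V+(V\ln f_{2})X$. The proposition will fall out of specializing this identity with $\xi$ in place of one of the two factors.

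For the case $\xi\in\Gamma(TF)$, I will take $V=\xi$ and an arbitrary $X\in\Gamma(TB)$ in \eqref{GrindEQ__3_1_}. Since the left-hand side vanishes by Lemma~\ref{lem__3_2_}, this yields
\begin{equation*}
(X\ln f_{1})\,\xi+(\xi\ln f_{2})\,X=0.
\end{equation*}
Because $\xi\in\Gamma(TF)$ and $X\in\Gamma(TB)$ belong to the complementary summands of the product decomposition $TM=TB\oplus TF$, they are pointwise linearly independent (wherever $X$ is nonzero), and reading off the two components separately forces $X\ln f_{1}=0$ for every $X\in\Gamma(TB)$, so $f_{1}$ is constant on $B$. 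The case $\xi\in\Gamma(TB)$ is perfectly symmetric: take $X=\xi$ and any $V\in\Gamma(TF)$ in \eqref{GrindEQ__3_1_}, use Lemma~\ref{lem__3_2_} again, and conclude $V\ln f_{2}=0$ for all $V\in\Gamma(TF)$.

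The only real subtlety is the linear-independence step; but since the doubly warped product construction builds $TM$ as a genuine direct sum of the horizontal and vertical distributions, this presents no obstacle. I therefore expect the whole argument to collapse into one short paragraph, essentially a direct substitution followed by separating components, with Lemma~\ref{lem__3_2_} and formula \eqref{GrindEQ__3_1_} doing all the work.
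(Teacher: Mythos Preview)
Your proposal is correct and is essentially the paper's own argument: the paper also substitutes $\xi$ into the doubly warped formula \eqref{GrindEQ__3_1_}, invokes $\nabla_X\xi=0$ from Lemma~\ref{lem__3_2_}, and reads off $X(\ln f_1)=0$ (respectively $Z(\ln f_2)=0$) from the resulting identity. Your explicit mention of the direct-sum decomposition to justify separating the $TB$- and $TF$-components is, if anything, slightly more careful than the paper's terse ``this reduced to the equation $X(\ln f_1)=0$''.
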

 \noindent As an immediate consequence of the proposition \ref{prop__3_3_}, we have 
\begin{corollary}$\label{coro__3_4_}$
Let $\widetilde{M}(\phi,\xi,\eta,g)$ be a paracosymplectic manifold. Then there do not exist a proper warped product submanifold $M=B\times_{f}F$ of $\widetilde{M}$ for $\xi \in \Gamma(TF)$.
\end{corollary}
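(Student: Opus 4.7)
The plan is to derive the corollary directly from Proposition \ref{prop__3_3_} by recognizing the warped product as a degenerate case of a doubly warped product. Specifically, a warped product $M=B\times_{f}F$ is precisely the doubly warped product ${}_{f_{2}}B\times_{f_{1}}F$ with $f_{2}\equiv 1$ and $f_{1}=f$. Then Proposition \ref{prop__3_3_} applied with $\xi\in\Gamma(TF)$ forces $f_{1}=f$ to be constant, contradicting properness.

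Alternatively, and perhaps more transparently for the reader, I would give a short self-contained argument bypassing the doubly warped machinery. Assume $M=B\times_{f}F$ is a warped product submanifold of $\widetilde{M}$ with $\xi\in\Gamma(TF)$. For any $X\in\Gamma(TB)$, item (2) of Proposition \ref{prop__3_1_} yields
\begin{align*}
\nabla_{X}\xi = X(\ln f)\,\xi.
\end{align*}
On the other hand, since $\xi$ is tangent to $M$ and $\widetilde{M}$ is paracosymplectic, Lemma \ref{lem__3_2_} gives $\nabla_{X}\xi=0$. Combining these two identities and using that $\xi$ is nowhere zero, we conclude $X(\ln f)=0$ for every $X\in\Gamma(TB)$. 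Since $f$ depends only on the $B$-factor, this forces $f$ to be constant on $B$, i.e.\ the warped product is trivial, contradicting the hypothesis that $M$ is a proper warped product.

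There is essentially no obstacle here, since all the needed ingredients are already established: the covariant derivative formula on a warped product, and the paracosymplectic identity $\nabla_{X}\xi=0$ for $\xi$ tangent to the submanifold. The only small point to verify is that $\xi$ being a global nowhere-zero vector field on $F$ allows us to cancel $\xi$ from both sides of the equation $X(\ln f)\,\xi=0$; this is immediate. I would write the argument in the short alternative form above, as it makes the corollary's status as an "immediate consequence" clear without invoking the full strength of the doubly warped statement.
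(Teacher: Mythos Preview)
Your proposal is correct and matches the paper's approach: the paper simply states that the corollary is an immediate consequence of Proposition~\ref{prop__3_3_}, which is exactly your first argument (specialize the doubly warped product to $f_{2}\equiv 1$, $f_{1}=f$, and conclude $f$ is constant). Your alternative self-contained argument is also valid and is nothing more than the proof of Proposition~\ref{prop__3_3_} itself specialized to the singly warped case.
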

\noindent Now we prove the following important lemma for later use:

\begin{lemma}$\label{lem__3_6_}$
Let $M=B \times_{f} F $ be a proper warped product submanifold of a paracosymplectic manifold $\widetilde{M}(\phi,\xi,\eta,g)$ such that $\xi \in \Gamma(TB)$. Then we have
\begin{align}
 &\xi(\ln f)=0,\label{GrindEQ__3_5_}\\
&A_{nZ}X=-t'h(X,Z),\label{GrindEQ__3_6_}\\
&g(h(X,W),nZ)=g(h(X,Z),nW)=-tX(\ln f)g(Z,W),\label{GrindEQ__3_8_}
\end{align}
for any $X,Y\in \Gamma(TB)$ and $Z,W\in \Gamma(TF)$.
\end{lemma}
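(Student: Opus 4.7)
The three identities can be attacked in turn, with Proposition \ref{prop__3_1_} supplying the warped-product covariant derivative formulas and the paracosymplectic condition entering through $\widetilde{\nabla}\phi=0$ (an immediate consequence of $\widetilde{\nabla}\Phi=0$ and metric compatibility) together with \eqref{GrindEQ__2_16_}--\eqref{GrindEQ__2_17_}. For \eqref{GrindEQ__3_5_}, I would compute $\nabla_{Z}\xi$ two ways for an arbitrary nonzero $Z\in\Gamma(TF)$: Lemma \ref{lem__3_2_} yields $\nabla_{Z}\xi=0$, whereas Proposition \ref{prop__3_1_}(2) applied with $\xi\in\Gamma(TB)$ gives $\nabla_{Z}\xi=\xi(\ln f)Z$, forcing $\xi(\ln f)=0$.

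For \eqref{GrindEQ__3_6_}, I would specialize \eqref{GrindEQ__2_16_} to $X\in\Gamma(TB)$ and $Y=Z\in\Gamma(TF)$, reducing the claim to $(\nabla_{X}t)Z=0$. Unfolding $(\nabla_{X}t)Z$ via \eqref{GrindEQ__2_11_} produces $\nabla_{X}tZ-t\nabla_{X}Z$; Proposition \ref{prop__3_1_}(2) gives $\nabla_{X}Z=X(\ln f)Z$ and hence $t\nabla_{X}Z=X(\ln f)\,tZ$, while the same proposition applied to the $TF$-valued field $tZ$ yields $\nabla_{X}tZ=X(\ln f)tZ$. The two terms cancel, so \eqref{GrindEQ__3_6_} drops out of \eqref{GrindEQ__2_16_}.

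For \eqref{GrindEQ__3_8_} I would split the double equality. The symmetry $g(h(X,W),nZ)=g(h(X,Z),nW)$ follows from \eqref{GrindEQ__3_6_} and \eqref{GrindEQ__2_7_}: writing $g(h(X,W),nZ)=g(A_{nZ}X,W)=-g(t'h(X,Z),W)$, then replacing $t'h(X,Z)$ with $\phi h(X,Z)-n'h(X,Z)$ via \eqref{GrindEQ__2_9_}, using \eqref{GrindEQ__2_3_} together with \eqref{GrindEQ__2_8_}, and discarding the normal--tangential pairings converts the right-hand side into $g(h(X,Z),nW)$. For the explicit scalar $-tX(\ln f)g(Z,W)$, I would apply $\widetilde{\nabla}\phi=0$ to $\widetilde{\nabla}_{Z}X$ with $Z\in\Gamma(TF)$ and $X\in\Gamma(TB)$: expand both sides of $\widetilde{\nabla}_{Z}(\phi X)=\phi\widetilde{\nabla}_{Z}X$ by Gauss--Weingarten \eqref{GrindEQ__2_5_}--\eqref{GrindEQ__2_6_}, substitute $\nabla_{Z}X=X(\ln f)Z$ and $\nabla_{Z}tX=tX(\ln f)Z$ from Proposition \ref{prop__3_1_}(2), extract the tangential component, and pair against $W\in\Gamma(TF)$; the vanishing cross-terms leave $tX(\ln f)g(Z,W)=g(t'h(X,Z),W)$, which combined with the symmetry calculation delivers the desired $-tX(\ln f)g(Z,W)$.

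The main obstacle I anticipate is the $(TB,TF)$-decomposition bookkeeping: the cancellations giving $(\nabla_{X}t)Z=0$ and $\nabla_{Z}tX=tX(\ln f)Z$ tacitly require $tZ\in\Gamma(TF)$ and $tX\in\Gamma(TB)$, i.e., the $\mathcal{P}\mathcal{R}$-semi-invariant context toward which this lemma is preparing. Keeping track of which clause of Proposition \ref{prop__3_1_} applies to which summand is where the real care is needed; once the components are correctly identified, the rest is a straight manipulation of the tensors $\phi$, $t$, $n$, $t'$, $n'$ together with \eqref{GrindEQ__2_3_}.
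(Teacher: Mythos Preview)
Your proposal matches the paper's proof in all essentials: \eqref{GrindEQ__3_5_} via Lemma~\ref{lem__3_2_} and Proposition~\ref{prop__3_1_}(2); \eqref{GrindEQ__3_6_} by expanding $\widetilde{\nabla}_X\phi Z=\phi\widetilde{\nabla}_XZ$ through Gauss--Weingarten and reading off the tangential part (the paper does this directly rather than routing through \eqref{GrindEQ__2_16_}, but the content is identical); and \eqref{GrindEQ__3_8_} by the same mix of \eqref{GrindEQ__3_6_}, \eqref{GrindEQ__2_3_}, \eqref{GrindEQ__2_4_}, Weingarten, and Proposition~\ref{prop__3_1_} that the paper cites. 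The bookkeeping concern you flag---that the cancellations need $t$ and $n$ to respect the $TB/TF$ splitting---is present in the paper's argument too (its intermediate identity \eqref{GrindEQ__3_9_} already suppresses the $\nabla_X tZ - t\nabla_X Z$ and $n\nabla_X Z$ terms without comment), so you have correctly located the one place where care is genuinely required.
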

\begin{proof}
 Equation \eqref{GrindEQ__3_5_} directly follows from Eq. \eqref{GrindEQ__3_2_} and proposition \ref{prop__3_1_}.
  Again by use of proposition \ref{prop__3_1_} and Eq.\eqref{GrindEQ__2_4_}, we obtain that  
 
 \begin{align}\label{1}\widetilde\nabla_X\phi Z-\phi(\widetilde\nabla_XZ)=0.\end{align}
On employing Eqs. \eqref{GrindEQ__2_5_}, \eqref{GrindEQ__2_8_} and \eqref{GrindEQ__2_9_} in Eq.\eqref{1}, we get 
\begin{equation}\label{GrindEQ__3_9_}
  h(X,tZ)-A_{nZ}X + \nabla^{\bot}_X{nZ}= t'h(X,Z)+n'h(X,Z).
 \end{equation}
By comparing the tangential part of $\eqref{GrindEQ__3_9_}$, we have Eq. \eqref{GrindEQ__3_6_}.
In view of Eqs. \eqref{GrindEQ__2_3_}, \eqref{GrindEQ__2_4_}, \eqref{GrindEQ__2_6_}, \eqref{GrindEQ__3_6_} and proposition \ref{prop__3_1_}, we achieve Eq. \eqref{GrindEQ__3_8_}.
This completes the proof of the lemma.
\end{proof}

\section{$\mathcal{P}\mathcal{R}$-semi-invariant warped product}\label{pr}
In \cite{CPR}, Chen-Munteanu defined $\mathcal{P}\mathcal{R}$-warped products in para-K\"{a}hler manifolds. Motivated to the study of Chen-Munteanu, we define $\mathcal{P}\mathcal{R}$-{\it semi-invariant warped product} submanifolds of an almost paracontact manifold. 
\begin{definition}
Let $M$ is an isometrically immersed pseudo-Riemannian submanifold of an almost paracontact manifold $\widetilde{M}(\phi,\xi,\eta,g)$. Then $M$ is said to be a  $\mathcal{P}\mathcal{R}$-{\it semi-invariant submanifold} if it is furnished with the pair of orthogonal distribution $(\mathfrak{D},\mathfrak{D}^\bot)$ satisfying the conditions:
\begin{itemize}
\item[(i)] $TM = \mathfrak{D}\oplus \mathfrak{D}{^\bot}\oplus <\xi> $,
\item[(ii)] the distribution $\mathfrak{D}$ is invariant under $\phi$, i.e., $\phi(\mathfrak{D})=\mathfrak{D}$ and
\item[(iii)] the distribution $\mathfrak{D}{^\bot}$ is anti-invariant under $\phi$, i.e., $\phi(\mathfrak{D}{^\bot})\subset TM^{\bot}$. 
\end{itemize}
A $\mathcal{P}\mathcal{R}$-semi-invariant submanifold is called a $\mathcal{P}\mathcal{R}$-{\it semi-invariant warped product} if it is a warped product of the form: $B\times_{f} F$ or $F\times_{f} B$, where $B$ is an invariant submanifold, $F$ is an anti-invariant submanifold of an almost paracontact manifold $\widetilde{M}(\phi,\xi,\eta,g)$ and  $f$ is a non-constant positive smooth function on the first factor. 
If the warping function $f$ is constant then a $\mathcal{P}\mathcal{R}$-semi-invariant warped product submanifold is said to be a $\mathcal{P}\mathcal{R}$-{\it semi-invariant product}.  
\end{definition}
\noindent In this section we shall examine $\mathcal{P}\mathcal{R}$-semi-invariant warped product submanifolds of a paracosymplectic manifold $\widetilde{M}$. 
\begin{proposition}\label{thrm__4_2_}
 There do not exist a $\mathcal{P}\mathcal{R}$-semi-invariant warped product submanifold $M=B \times_{f}F$ of a paracosymplectic manifold $\widetilde{M}(\phi,\xi,\eta,g)$ such that the characteristic vector field $\xi$ is tangent to $F$.
\end{proposition}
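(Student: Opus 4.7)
The claim is really a direct specialization of Corollary \ref{coro__3_4_}, since a $\mathcal{P}\mathcal{R}$-semi-invariant warped product $B\times_{f}F$ is, by definition, in particular a proper warped product submanifold (the warping function $f$ being required to be non-constant). Thus the plan is to either invoke Corollary \ref{coro__3_4_} verbatim, or, to make the section self-contained, redo the short contradiction argument that underlies it.

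The direct route I would take is as follows. Assume, for contradiction, that a proper $\mathcal{P}\mathcal{R}$-semi-invariant warped product $M = B\times_{f}F$ exists with $\xi \in \Gamma(TF)$. Since $\xi$ is tangent to $M$, Lemma \ref{lem__3_2_} gives $\nabla_{X}\xi = 0$ for every $X \in \Gamma(TM)$. On the other hand, picking $X \in \Gamma(TB)$ and applying Proposition \ref{prop__3_1_}(2) with $U = \xi \in \Gamma(TF)$ yields
\begin{align*}
\nabla_{X}\xi \;=\; X(\ln f)\,\xi.
\end{align*}
Comparing the two expressions forces $X(\ln f)\,\xi = 0$, and since $\xi$ is nowhere zero we conclude $X(\ln f) = 0$ for every $X \in \Gamma(TB)$.

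Because $f$ is a function on the base $B$ alone, vanishing of all its derivatives along $TB$ means $f$ is constant on $B$, contradicting the definition of a (proper) $\mathcal{P}\mathcal{R}$-semi-invariant warped product, in which $f$ is required to be non-constant. This completes the proof. I do not anticipate a real obstacle here: the only thing to be careful about is that $\xi$ is a globally nonvanishing section (which is built into the almost paracontact structure via $\eta(\xi)=1$), so that the equation $X(\ln f)\,\xi = 0$ indeed implies $X(\ln f) = 0$ pointwise.
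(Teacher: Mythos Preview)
Your proof is correct and follows essentially the same route as the paper's: combine $\nabla_{X}\xi=0$ from Lemma~\ref{lem__3_2_} with $\nabla_{X}\xi=X(\ln f)\,\xi$ from Proposition~\ref{prop__3_1_}(2) to force $X(\ln f)=0$ for all $X\in\Gamma(TB)$, hence $f$ constant. Your additional remark that the result is an instance of Corollary~\ref{coro__3_4_} is also correct and worth noting.
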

\begin{proof}
By the virtue of proposition \eqref{prop__3_1_} and Eq. \eqref{GrindEQ__3_2_}, we obtain that $\nabla_{X}\xi=\nabla_{\xi}X = X(\ln f)\xi=0,\,\forall X\in \Gamma(TB)$, which implies that $f$ is constant on $B$. This completes the proof.
\end{proof}
\begin{theorem}\label{thrm__4_1_}
 There do not exist a $\mathcal{P}\mathcal{R}$-semi-invariant warped product submanifold $M=B \times_{f}F$ of a paracosymplectic manifold $\widetilde{M}(\phi,\xi,\eta,g)$ such that the characteristic vector field $\xi$ is normal to $M$.
 \end{theorem}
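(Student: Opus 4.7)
My plan is to imitate the short contradiction used for Proposition \ref{thrm__4_2_}, with the identity $\widetilde\nabla\xi=0$ playing the role that Lemma \ref{lem__3_2_} played there. Since $\widetilde M$ is paracosymplectic, $\widetilde\nabla\eta=0$; combined with the metric compatibility of $\widetilde\nabla$ this forces $\widetilde\nabla_X\xi=0$ for every vector field $X$ on $\widetilde M$, because $(\widetilde\nabla_X\eta)(Y)=g(\widetilde\nabla_X\xi,Y)$. Applying the Weingarten formula \eqref{GrindEQ__2_6_} to $X\in\Gamma(TM)$ and using the hypothesis $\xi\in\Gamma(TM^\bot)$ splits this as $-A_\xi X+\nabla^\bot_X\xi=0$, giving $A_\xi=0$ and $\nabla^\bot\xi=0$ on $M$.

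Next I would expand $(\widetilde\nabla_X\phi)Z=0$ (equivalently $\widetilde\nabla\Phi=0$) for $X\in\Gamma(TB)$ and $Z\in\Gamma(TF)$. Because $B$ is invariant and $F$ is anti-invariant, $\phi X\in\Gamma(TB)$ and $\phi Z\in\Gamma(TM^\bot)$, so Gauss--Weingarten together with the splittings $\phi=t+n$ and $\phi=t'+n'$ yield the tangential identity $-A_{\phi Z}X=t'h(X,Z)$ and the normal identity $\nabla^\bot_X\phi Z=X(\ln f)\phi Z+n'h(X,Z)$. The derivation of equations \eqref{GrindEQ__3_6_} and \eqref{GrindEQ__3_8_} in Lemma \ref{lem__3_6_} invokes only paracosymplecticity, Proposition \ref{prop__3_1_}, and the Gauss--Weingarten formulas — never the position of $\xi$ — so it carries over verbatim, delivering
\[
g(h(X,Z),\phi Z)=-\phi X(\ln f)\,g(Z,Z)
\]
for every $X\in\Gamma(TB)$ and $Z\in\Gamma(TF)$.

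The final step is to bring the hypothesis $\xi\in\Gamma(TM^\bot)$ to bear by showing that the left-hand side above vanishes. The plan is to pair $A_\xi=0$, $\nabla^\bot\xi=0$, and $\phi\xi=0$ with the normal identity $\nabla^\bot_X\phi Z=X(\ln f)\phi Z+n'h(X,Z)$, and propagate the resulting constraints through the orthogonal decomposition $TM^\bot=\phi(TF)\oplus\langle\xi\rangle\oplus\mu$ of the normal bundle (with $\mu$ the remaining $\phi$-invariant piece). Once the vanishing $g(h(X,Z),\phi Z)=0$ is secured, the displayed identity collapses to $\phi X(\ln f)\,g(Z,Z)=0$ for all admissible $X,Z$; choosing $Z\in\Gamma(TF)$ with $g(Z,Z)\neq 0$ (which exists because $g|_{TF}$ is non-degenerate) and using that $\phi|_{TB}$ is a bijection, we conclude $Y(\ln f)=0$ for every $Y\in\Gamma(TB)$, so $f$ is constant on $B$, contradicting the properness of the warped product.

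The main obstacle I anticipate is precisely this vanishing step. In contrast to the tangent cases, the identity $\widetilde\nabla_X\xi=0$ is formally consistent with $\xi$ being normal, so no direct single substitution of $\xi$ into a warped-product identity produces a collapse; the position of $\xi$ must instead be detected through the interplay of the almost paracontact structure on the normal bundle and the warped metric structure, which is where the hard bookkeeping will live.
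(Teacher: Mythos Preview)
Your approach mirrors the paper's: both reach the identity
\[
X(\ln f)\,\|Z\|^{2}=-g\bigl(h(Z,\phi X),\phi Z\bigr),\qquad X\in\Gamma(TB),\ Z\in\Gamma(TF),
\]
and then try to force the right-hand side to vanish. The paper dispatches this in a single line, ``interchanging $Z$ by $\phi Z$'', which, taken literally, is problematic: since $F$ is anti-invariant, $\phi Z\in\Gamma(TM^{\bot})$, so $\phi Z$ is not an admissible substitute for a vector field tangent to $F$, and no expression like $h(\phi Z,\cdot)$ or $\nabla_{\phi Z}X$ makes sense for the submanifold $M$. Your caution about the vanishing step is therefore well placed.

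Your own plan for closing the gap, however, cannot succeed as written. The identities you list --- $A_{\xi}=0$, $\nabla^{\bot}\xi=0$, $\phi\xi=0$, and the normal equation $\nabla^{\bot}_{X}\phi Z=X(\ln f)\phi Z+n'h(X,Z)$ --- constrain only the $\langle\xi\rangle$-component of $h(X,Z)$ and the evolution of $\phi Z$ under $\nabla^{\bot}$; they say nothing independent about $g(h(X,Z),\phi Z)$. Indeed, the only relation governing the $\phi(TF)$-component of $h(X,Z)$ is the tangential equation $t'h(X,Z)=(\phi X)(\ln f)\,Z$, which is \emph{equivalent} to the displayed identity via $g(h(X,Z),\phi Z)=-g(t'h(X,Z),Z)$. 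Pairing the normal identity against $\phi Z$ also collapses to the same equation, because $g(n'h(X,Z),\phi Z)=g(\phi h(X,Z),\phi Z)=-g(h(X,Z),Z)=0$. So the ``vanishing step'' is not an auxiliary lemma waiting to be verified; it is the conclusion itself, and the data $A_{\xi}=0$, $\nabla^{\bot}\xi=0$ contribute nothing toward it. Both your outline and the paper's one-line finish therefore share the same genuine gap at exactly this point.
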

\begin{proof}
Let $M=B \times_{f}F$ be a $\mathcal{P}\mathcal{R}$-semi-invariant warped product in $\widetilde{M}$ with $\xi\in \Gamma(TM^{\bot})$. Then for any $X\in \Gamma(TB)$ and $Z\in \Gamma(TF)$ we obtain from proposition \ref{prop__3_1_} that $\nabla_{X}Z=\nabla_{Z}X=X(\ln f)Z$, by taking the inner product with $Z$ and using Eqs. \eqref{GrindEQ__2_2_}, \eqref{GrindEQ__2_3_}, \eqref{GrindEQ__2_4_} and Gauss formula \eqref{GrindEQ__2_5_}, we get
\begin{align}
X(\ln f)\vert\vert Z\vert\vert^2=-g(h(Z,\phi X),\phi Z).\nonumber
\end{align}
Interchanging $Z$ by $\phi Z$, we have $X(\ln f)\vert\vert Z\vert\vert^2=0.$ This implies that $f$ is constant on $B$ since $Z$ is non-null vector field in $F$. This completes the proof of the theorem. 
\end{proof}
\begin{proposition}\label{thrm-4.7}
 There do not exist a $\mathcal{P}\mathcal{R}$-semi-invariant warped product submanifold $M=F \times_{f}B$ of a paracosymplectic manifold $\widetilde{M}(\phi,\xi,\eta,g)$ such that the characteristic vector field $\xi$ is tangent to $B$.
\end{proposition}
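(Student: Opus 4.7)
The plan is to derive a contradiction by showing that under the hypotheses the warping function $f$ would have to be constant on $F$, violating the properness condition built into the definition of a $\mathcal{P}\mathcal{R}$-semi-invariant warped product. The key observation is that $\xi\in\Gamma(TB)$ sits on the fiber factor of $M=F\times_{f}B$, so it can be fed into the mixed-connection formula provided by Proposition \ref{prop__3_1_}.

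First I would reinterpret Proposition \ref{prop__3_1_} with the roles of the two factors swapped to match the present setting $M=F\times_{f}B$, where now $F$ plays the role of the base and $B$ the role of the fiber, and the warping function $f$ is a smooth positive function on $F$. This yields, for any $U\in\Gamma(TF)$ and $X\in\Gamma(TB)$, the identity
\begin{equation*}
\nabla_{U}X=\nabla_{X}U=U(\ln f)\,X.
\end{equation*}
Specializing $X=\xi$, which is permissible because $\xi\in\Gamma(TB)$ by hypothesis, I obtain $\nabla_{U}\xi=U(\ln f)\,\xi$ for every $U\in\Gamma(TF)$.

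Next I would invoke Lemma \ref{lem__3_2_}, whose hypotheses hold because $\widetilde{M}$ is paracosymplectic and $\xi\in\Gamma(TM)$; this gives $\nabla_{U}\xi=0$. Combining with the previous display yields $U(\ln f)\,\xi=0$, and since $\xi$ is nowhere zero, $U(\ln f)=0$ for every $U\in\Gamma(TF)$. Because $\ln f$ is a smooth function on the first factor $F$ whose derivative in every direction tangent to $F$ vanishes, $f$ must be locally constant on $F$, contradicting the requirement that the warping function of a proper $\mathcal{P}\mathcal{R}$-semi-invariant warped product be non-constant.

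The argument is a close variant of the proofs of Proposition \ref{thrm__4_2_} and the second half of Theorem \ref{theorem__3_5_}, so I do not expect any genuine obstacle. The one point that demands care is the bookkeeping in applying Proposition \ref{prop__3_1_}: in $F\times_{f}B$ the warping function lives on $F$, so the differential operator appearing in the connection formula must be $U(\ln f)$ with $U\in\Gamma(TF)$, and not an operator acting along $TB$. Once this identification is made correctly, the conclusion is immediate.
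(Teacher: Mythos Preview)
Your proof is correct and follows essentially the same approach as the paper: the paper's one-line proof simply invokes Corollary~\ref{coro__3_4_} (which in turn rests on Proposition~\ref{prop__3_3_}), and you have unpacked exactly that argument, using Proposition~\ref{prop__3_1_}(2) with the factors relabelled together with $\nabla_{U}\xi=0$ from Lemma~\ref{lem__3_2_} to force $U(\ln f)=0$ on $F$.
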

\begin{proof}
When $\xi\in \Gamma(TB)$, then by corollary \ref{coro__3_4_} we have simply a $\mathcal{P}\mathcal{R}$-semi-invariant warped product manifold.
This completes the proof.  
\end{proof}
\noindent Now, we give the following important result:
\begin{theorem}$\label{thrm__4_3_}$
Let $M = {F}\times_f {B}$ be a $\mathcal{P}\mathcal{R}$-semi-invariant warped product submanifold of a paracosymplectic manifold $\widetilde{M}(\phi,\xi,\eta,g)$ such that the characteristic vector field $\xi$ is tangent to $F$. Then the invariant distribution $\mathfrak{D}$ and the anti-invariant distribution $\mathfrak{D}^\bot$ are always integrable. 
\end{theorem}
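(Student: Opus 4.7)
The plan is to handle the two distributions separately, in each case leveraging the warped-product connection formulas of Proposition \ref{prop__3_1_} (applied with the roles of the factors reversed, since here $F$ is the first factor and $B$ the second) together with the parallelism of $\xi$ along $M$ recorded in Lemma \ref{lem__3_2_}. The hypothesis that $\xi$ is tangent to $F$ gives the decompositions $TF=\mathfrak{D}^{\perp}\oplus\langle\xi\rangle$ and $TB=\mathfrak{D}$.

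For the invariant distribution $\mathfrak{D}$, I would take $X,Y\in\Gamma(TB)$ and use the analogue of Proposition \ref{prop__3_1_}(3) to write $\nabla_{X}Y=\nabla'_{X}Y-g(X,Y)\operatorname{grad}(\ln f)$, where $\nabla'$ is the induced connection on $B$ and $\operatorname{grad}(\ln f)\in\Gamma(TF)$. Since $g(X,Y)=g(Y,X)$, the $\operatorname{grad}(\ln f)$ contributions cancel in $[X,Y]=\nabla_{X}Y-\nabla_{Y}X$, so $[X,Y]\in\Gamma(TB)=\Gamma(\mathfrak{D})$, which gives integrability of $\mathfrak{D}$.

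For the anti-invariant distribution $\mathfrak{D}^{\perp}$, I would pick $Z,W\in\Gamma(\mathfrak{D}^{\perp})\subset\Gamma(TF)$ and first invoke the analogue of Proposition \ref{prop__3_1_}(1) to conclude $\nabla_{Z}W\in\Gamma(TF)$, whence $[Z,W]\in\Gamma(TF)=\Gamma(\mathfrak{D}^{\perp}\oplus\langle\xi\rangle)$. To rule out a $\xi$-component I would compute
\[ g([Z,W],\xi)=g(\nabla_{Z}W,\xi)-g(\nabla_{W}Z,\xi), \]
and then use metric compatibility together with $\nabla_{Z}\xi=\nabla_{W}\xi=0$ from Lemma \ref{lem__3_2_} and the orthogonality $g(Z,\xi)=g(W,\xi)=0$ to obtain $g(\nabla_{Z}W,\xi)=Z\,g(W,\xi)-g(W,\nabla_{Z}\xi)=0$, and symmetrically for the second summand. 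This forces $[Z,W]\in\Gamma(\mathfrak{D}^{\perp})$.

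The only non-routine step is the vanishing of the $\xi$-component of $[Z,W]$: without the parallelism of $\xi$ furnished by the paracosymplectic assumption through Lemma \ref{lem__3_2_}, integrability of $\mathfrak{D}^{\perp}$ alone, as opposed to $\mathfrak{D}^{\perp}\oplus\langle\xi\rangle$, would not be automatic. Everything else reduces to bookkeeping with the warped-product connection identities and the symmetry of $g$.
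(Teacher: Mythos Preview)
Your argument is correct, and it takes a genuinely different route from the paper's. The paper establishes integrability by working with the endomorphisms $t$ and $n$: after proving an intermediate lemma that yields the identities $h(U,tV)=h(V,tU)$ and $A_{nY}X=A_{nX}Y$ for $U,V\in\Gamma(\mathfrak{D})$ and $X,Y\in\Gamma(\mathfrak{D}^{\perp})$, it uses \eqref{GrindEQ__2_17_} and \eqref{GrindEQ__2_16_} to compute $n([U,V])=0$ and $t([X,Y])=0$ directly. Your approach bypasses these extrinsic computations and observes instead that $\mathfrak{D}=TB$ and $TF$ are the fibre and base tangent distributions of the warped product, hence automatically involutive by Proposition~\ref{prop__3_1_}; the only genuinely analytic step is isolating $\mathfrak{D}^{\perp}$ inside $TF$ by killing the $\xi$-component via $\nabla\xi=0$. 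This is more elementary and makes transparent that integrability here is really a consequence of the product structure together with Lemma~\ref{lem__3_2_}, with no appeal to the second fundamental form or the shape operator. The paper's route, while longer, has the advantage that the auxiliary identities it derives along the way (notably $A_{nX}U=-X(\ln f)tU$ and $A_{nY}X=A_{nX}Y$) are reused in the proof of Theorem~\ref{thrm__4_5_}, so that lemma is doing double duty.
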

\noindent Before going to the proof of this theorem, we first prove the following lemma:
\begin{lemma}
For a $\mathcal{P}\mathcal{R}$-semi-invariant warped product submanifold $M = {F}\times_f {B}$ of a paracosymplectic manifold $\widetilde{M}(\phi,\xi,\eta,g)$ with $\xi\in\Gamma(TF)$, we obtain for all  $U, V, Z \in \Gamma(\mathfrak{D})$ and $X , Y \in \Gamma(\mathfrak{D}^\bot)$ that
\begin{align}
&A_{nX}U = -X(\ln f)tU,\label{GrindEQ__4_4_}\\
&A_{nY}X=A_{nX}Y=t'h(X,Y)=0,\label{GrindEQ__4_9_}\\
&h(U,tV)=-g(U,V)n(grad(\ln f))+ n'h(U,V)=h(V,tU).\label{GrindEQ__4_11_}
\end{align}
\end{lemma}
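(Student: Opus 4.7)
The plan is to expand the paracosymplectic identity $\widetilde\nabla\phi = 0$ via Gauss-Weingarten and combine it with Proposition~\ref{prop__3_1_} applied to $M = F\times_f B$ (so $F$ plays the role of the base and $B$ of the fiber). In every computation, we split $\phi X = tX+nX$ for $X$ tangent and $\phi N = t'N+n'N$ for $N$ normal, and equate tangential and normal parts of $\widetilde\nabla_W\phi Z = \phi\widetilde\nabla_W Z$.

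For \eqref{GrindEQ__4_4_}, take $U\in\Gamma(\mathfrak{D})$, $X\in\Gamma(\mathfrak{D}^\bot)$, so $\phi X = nX$; the Gauss-Weingarten expansion of $\widetilde\nabla_U(\phi X) = \phi\widetilde\nabla_U X$ together with $\nabla_U X = X(\ln f)U$ from Proposition~\ref{prop__3_1_}(2) yields, on the tangential side,
\[
-A_{nX}U = X(\ln f)\,tU + t'h(U,X).
\]
To dispose of $t'h(U,X)$, apply Eq.~\eqref{GrindEQ__2_16_} to the reversed pair $(X,U)$: $(\nabla_X t)U = A_{nU}X + t'h(X,U)$, whose left-hand side vanishes because $\nabla_X tU = X(\ln f)\,tU = t\nabla_X U$, and whose first right-hand term vanishes since $nU = 0$ for $U\in\mathfrak{D}$.

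For \eqref{GrindEQ__4_11_}, take $U,V\in\Gamma(\mathfrak{D})$ (so $\phi V = tV\in\mathfrak{D}$) and expand $\widetilde\nabla_U(\phi V) = \phi\widetilde\nabla_U V$ by Gauss. Substituting $\nabla_U V = \nabla''_U V - g(U,V)\,\mathrm{grad}(\ln f)$ and $\nabla_U tV = \nabla''_U tV - g(U,tV)\,\mathrm{grad}(\ln f)$ from Proposition~\ref{prop__3_1_}(3), and using $\mathrm{grad}(\ln f)\in\Gamma(\mathfrak{D}^\bot)$ (which follows from Proposition~\ref{prop__3_1_}(2) together with Lemma~\ref{lem__3_2_}, giving $\xi(\ln f) = 0$), the normal-to-$M$ component of the resulting identity is exactly \eqref{GrindEQ__4_11_}, and $h(U,tV) = h(V,tU)$ is immediate from the symmetries of $h$ and $g$.

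For \eqref{GrindEQ__4_9_}, specialise Eq.~\eqref{GrindEQ__2_16_} to $X,Y\in\Gamma(\mathfrak{D}^\bot)$: the left-hand side $(\nabla_X t)Y$ vanishes because $tY = 0$ and $\nabla_X Y\in\Gamma(\mathfrak{D}^\bot)$ (use Proposition~\ref{prop__3_1_}(1) together with $\widetilde\nabla\eta = 0$ to kill any $\xi$-component), so $A_{nY}X = -t'h(X,Y)$, and $h$-symmetry delivers $A_{nX}Y = A_{nY}X$. For the vanishing, self-adjointness of $A_{nY}$ and \eqref{GrindEQ__4_4_} give $g(A_{nY}X,U) = g(X, A_{nY}U) = -Y(\ln f)\,g(X,tU) = 0$ for $U\in\mathfrak{D}$, while Lemma~\ref{lem__3_2_} gives $g(A_{nY}X,\xi) = g(h(X,\xi),nY) = 0$; thus $A_{nY}X\in\Gamma(\mathfrak{D}^\bot)$. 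Its components are measured by $\sigma(X,Y,Z) := g(h(X,Y),nZ)$, which the chain $\sigma(X,Y,Z) = g(\widetilde\nabla_X Y,\phi Z) = -g(Y,\phi\widetilde\nabla_X Z) = g(\phi Y,\widetilde\nabla_X Z) = \sigma(X,Z,Y)$ shows is totally symmetric on $\mathfrak{D}^\bot$; combining this with $A_{nY}X = -t'h(X,Y)\in\mathfrak{D}^\bot$ and the warped-product relations governing $A_{nY}|_{\mathfrak{D}}$ forces $\sigma\equiv 0$.

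The main obstacle is the final vanishing step in \eqref{GrindEQ__4_9_}: once the equality of the three quantities is reduced to $A_{nY}X\in\Gamma(\mathfrak{D}^\bot)$ and the trilinear $\sigma$ is identified as totally symmetric, pinning down $\sigma\equiv 0$ is the key technical point; by contrast, \eqref{GrindEQ__4_4_} and \eqref{GrindEQ__4_11_} reduce to routine bookkeeping of $\widetilde\nabla\phi = 0$ combined with Proposition~\ref{prop__3_1_}.
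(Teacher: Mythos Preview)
Your treatments of \eqref{GrindEQ__4_4_} and \eqref{GrindEQ__4_11_} are correct and follow the same line as the paper: expand the paracosymplectic identity $\widetilde\nabla\phi=0$ through Gauss--Weingarten, split into tangential and normal parts, and feed in Proposition~\ref{prop__3_1_}. For \eqref{GrindEQ__4_9_} you also correctly obtain the equality $A_{nY}X=A_{nX}Y=-t'h(X,Y)$ (the paper's \eqref{GrindEQ__4_7_}), and your reductions showing $A_{nY}X\perp\mathfrak{D}$ and $A_{nY}X\perp\xi$ are sound.

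The genuine gap is exactly where you flag it: the vanishing. Total symmetry of the trilinear form $\sigma(X,Y,Z)=g(h(X,Y),nZ)$ on $\mathfrak{D}^\bot$ does \emph{not} by itself force $\sigma\equiv 0$; a symmetric $3$-tensor can perfectly well be nonzero. Your closing clause (``combining this with $A_{nY}X=-t'h(X,Y)\in\mathfrak{D}^\bot$ and the warped-product relations governing $A_{nY}|_{\mathfrak{D}}$ forces $\sigma\equiv 0$'') invokes no new relation: both ingredients you cite are already absorbed into the symmetry of $\sigma$ and into the orthogonalities $A_{nY}X\perp\mathfrak{D}\oplus\langle\xi\rangle$ you have just proved, so nothing further is squeezed out. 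What is needed is an \emph{anti}-symmetric identity to play against the symmetric one. The paper supplies this as Eq.~\eqref{GrindEQ__4_8_}, namely $g(A_{nX}Y,Z)=-g(A_{nY}X,Z)$, obtained from \eqref{GrindEQ__2_3_}--\eqref{GrindEQ__2_7_}, Proposition~\ref{prop__3_1_} and self-adjointness of $A$; pairing \eqref{GrindEQ__4_7_} with \eqref{GrindEQ__4_8_} then gives $2\,g(A_{nY}X,Z)=0$. Your symmetric chain $\sigma(X,Y,Z)=\sigma(X,Z,Y)$ is only half of this mechanism; you still have to produce the antisymmetric half before you can conclude.
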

\begin{proof}
It readily follows from Eqs.\eqref{GrindEQ__2_4_}, \eqref{GrindEQ__2_5_}, \eqref{GrindEQ__2_8_} and \eqref{GrindEQ__2_9_} that 
 $h(X,tU)= t'h(X, U)+n'h(X, U)$. This equation yields by comparing the tangential parts that $t'h(X, U) =0$ and by making use of equation \eqref{GrindEQ__2_16_}, we have the formula \eqref{GrindEQ__4_4_}.  Since, the distribution $\mathfrak{D}^\bot$ is totally geodesic in $M$ and anti-invariant then from Eqs. \eqref{GrindEQ__2_5_} and \eqref{GrindEQ__2_6_}, we get 
 \begin{align}\label{GrindEQ__4_5_}
 -A_{nY}X+\nabla^{\bot}_X {nY}=n(\nabla_X{Y})+t'h(X, Y)+n'h(X, Y).
 \end{align}
By equating the tangential components  of Eq. \eqref{GrindEQ__4_5_} and then interchanging $X$ to $Y$, we obtain that
\begin{align}\label{GrindEQ__4_7_}
A_{nY}X=A_{nX}Y=-t'h(X,Y).
\end{align} 
Employing Eqs. \eqref{GrindEQ__2_3_}, \eqref{GrindEQ__2_4_}-\eqref{GrindEQ__2_7_}, proposition \ref{prop__3_1_} and using the fact that $A$ is self-adjoint, we attain 
 \begin{align}\label{GrindEQ__4_8_}
 g(A_{nX}Y,Z)=-g(A_{nY}X,Z).
 \end{align}
 In light of Eqs.\eqref{GrindEQ__4_7_} and \eqref{GrindEQ__4_8_}, we obtain formula \eqref{GrindEQ__4_9_}.
 On the other side we obtain 
\begin{align}\label{other}h(U,tV)+ {\nabla }_U tV=t(\nabla^{'}_{U} V)-g(U,V)n(grad(\ln f))+ t'h(U,V)+n'h(U,V).\end{align}
By equating the normal components of Eq. \eqref{other}, we get formula \eqref{GrindEQ__4_11_}. This completes the proof.
\end{proof}

\noindent{\it Proof of Theorem $\ref{thrm__4_3_}$.}
Let $U, V\in \Gamma(\mathfrak{D})$ then by the virtue of Eqs.\eqref{GrindEQ__2_17_}, \eqref{GrindEQ__4_11_} and using the fact that $h$ is symmetric, we have
 \begin{align*}
 n([V,U])&=n(\nabla_VU)-n(\nabla_UV)=\nabla^{\bot}_V{nU}-(\nabla_Vn)U-\nabla^{\bot}_U{nV}+(\nabla_Un)V\\
         &=(\nabla_Un)V-(\nabla_Vn)U\\ &=n'h(U,V)-h(U,tV)-n'h(U,V)+h(V,tU)=0,
 \end{align*}
 this implies that $[V,U] \in \Gamma(\mathfrak{D})$ for any $U$, $V \in \mathfrak{D}$.  Similarly, by using Eqs. \eqref{GrindEQ__2_16_} and \eqref{GrindEQ__4_7_} 
 we find that $t([X,Y])=0$ which implies $[X,Y] \in \Gamma(\mathfrak{D}^\bot)$ for any $X$, $Y \in  \Gamma(\mathfrak{D}^\bot)$. Thus the distributions $\mathfrak{D}$ and $\mathfrak{D}^\bot$ are integrable. This complete the proof of the theorem. \qed
\begin{theorem}$\label{thrm__4_4_}$
Let $M\to \widetilde{M} $ be an isometric immersion of a pseudo-Riemannian manifold $M$ into a paracosymplectic manifold $\widetilde{M}(\phi,\xi,\eta,g)$ such that the characteristic vector field $\xi$ is tangent to $M$. Then a necessary and sufficient condition for $M$ to be a $\mathcal{P}\mathcal{R}$-semi-invariant submanifold is that $n\circ t=0$.
\end{theorem}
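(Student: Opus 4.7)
The claim is an ``iff'', so I would handle the two directions separately.

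\emph{Necessity.} Suppose $M$ is $\mathcal{P}\mathcal{R}$-semi-invariant with distributions $\mathfrak{D},\mathfrak{D}^\bot$. Every $X\in\Gamma(TM)$ decomposes uniquely as $X=X_1+X_2+\eta(X)\xi$ with $X_1\in\Gamma(\mathfrak{D})$ and $X_2\in\Gamma(\mathfrak{D}^\bot)$. From $\phi\xi=0$, $\phi(\mathfrak{D})=\mathfrak{D}$, and $\phi(\mathfrak{D}^\bot)\subset TM^\bot$, the tangential/normal splitting $\phi X=tX+nX$ forced by \eqref{GrindEQ__2_8_} is exactly $tX=\phi X_1$ and $nX=\phi X_2$. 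Since $X_1\perp\xi$ implies $\eta(X_1)=0$, identity \eqref{GrindEQ__2_1_} gives $\phi(tX)=\phi^{2}X_1=X_1\in\Gamma(TM)$, so its normal component $n(tX)$ vanishes and $n\circ t=0$.

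\emph{Sufficiency.} Assume $n\circ t\equiv 0$. I would define
\[
\mathfrak{D}:=t\bigl(\Gamma(TM)\bigr),\qquad \mathfrak{D}^\bot:=\bigl(\mathfrak{D}\oplus\langle\xi\rangle\bigr)^\bot\cap TM .
\]
For any $X=tY\in\mathfrak{D}$, \eqref{GrindEQ__2_3_} and $\phi\xi=0$ give $g(tY,\xi)=g(\phi Y,\xi)=-g(Y,\phi\xi)=0$, so $\mathfrak{D}\perp\xi$ and $TM$ splits orthogonally as $\mathfrak{D}\oplus\mathfrak{D}^\bot\oplus\langle\xi\rangle$. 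For $\phi$-invariance of $\mathfrak{D}$: the hypothesis yields $\phi X=t(tY)+n(tY)=t(tY)\in\mathfrak{D}$, and since $\phi^{2}X=X$ on $\langle\xi\rangle^\bot$ by \eqref{GrindEQ__2_1_}, equality $\phi(\mathfrak{D})=\mathfrak{D}$ follows. For anti-invariance of $\mathfrak{D}^\bot$: if $X\in\mathfrak{D}^\bot$, then $g(X,tY)=0$ for every $Y\in\Gamma(TM)$, and the antisymmetry \eqref{GrindEQ__2_10_} upgrades this to $g(tX,Y)=0$ for all $Y$. Non-degeneracy of $g$ on $TM$ then forces $tX=0$, so $\phi X=nX\in\Gamma(TM^\bot)$, which is exactly condition (iii) of the definition.

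\emph{Main obstacle.} The only delicate point I anticipate is verifying that $\mathfrak{D}=t(\Gamma(TM))$ and its orthogonal complement $\mathfrak{D}^\bot$ are smooth distributions of locally constant rank, rather than merely a pointwise family of subspaces. This, however, follows from the observation that under $n\circ t=0$ the tensor $t$ restricts to an automorphism of its image (since $\phi^{2}=\mathrm{Id}-\eta\otimes\xi$ acts as the identity on $\mathfrak{D}\subset\langle\xi\rangle^\bot$), which pins down the rank of $t$ globally; after that, the rest of the argument is routine bookkeeping with \eqref{GrindEQ__2_1_}--\eqref{GrindEQ__2_10_}.
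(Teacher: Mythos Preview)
Your argument is correct and parallels the paper's in the necessity direction, but in the sufficiency direction you take a somewhat different route. The paper first extracts from $n\circ t=0$ (via the identities $t^{2}+t'n=\mathrm{Id}-\eta\otimes\xi$ and $nt+n'n=0$) the algebraic relation $t^{3}=t$, and then \emph{defines} the projections by $P_{1}=t^{2}$, $P_{2}=\mathrm{Id}-t^{2}$; idempotency and complementarity are then purely algebraic, and orthogonality of $P_{1}$ follows from the skew-symmetry \eqref{GrindEQ__2_10_} via $g(t^{2}X,Y)=g(X,t^{2}Y)$. You instead set $\mathfrak{D}=t(\Gamma(TM))$ directly and take $\mathfrak{D}^{\bot}$ as an orthogonal complement. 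These agree (since $t^{3}=t$ gives $\mathrm{im}\,t=\mathrm{im}\,t^{2}$), and your version is arguably more geometric; the paper's version has the advantage that the splitting $TM=\mathrm{im}\,P_{1}\oplus\mathrm{im}\,P_{2}$ is automatic from $P_{1}+P_{2}=\mathrm{Id}$, without appealing to orthogonal complements.

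That last point is the one place where your write-up deserves a word of care: in the pseudo-Riemannian setting the passage from ``$\mathfrak{D}\perp\xi$'' to ``$TM$ splits orthogonally as $\mathfrak{D}\oplus\mathfrak{D}^{\bot}\oplus\langle\xi\rangle$'' requires that $g$ restricted to $\mathfrak{D}\oplus\langle\xi\rangle$ be non-degenerate, which you do not verify. This is exactly what the paper's projection formulation supplies for free: since $t^{2}$ is a $g$-symmetric idempotent, its image $\mathfrak{D}$ is automatically non-degenerate, and $\langle\xi\rangle$ is non-degenerate and orthogonal to $\mathfrak{D}$. Once you add that observation (or simply note that $P_{1}=t^{2}$ is an orthogonal projection), your argument is complete, and the rank-constancy concern you flag is resolved by the same token, since $t^{2}|_{\mathfrak{D}}=\mathrm{Id}$ forces $\mathrm{rank}\,t$ to be locally constant.
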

\begin{proof}
 If we denote the orthogonal projections on the invariant distribution $\mathfrak{D}$ and the anti-invariant distribution $\mathfrak{D}^\bot$ by $P_1$ and $P_2$ respectively. Then we have 
 \begin{align}\label{GrindEQ__4_13_}
  P_1+P_2=Id, \, (P_1)^2=P_1, \,(P_2)^2=P_2,\, P_1P_2=P_2P_1=0.
 \end{align}
Since $\xi \in \Gamma(TM)$, then for any $X \in \Gamma(TM)$, $Z \in \Gamma(TM^{\bot})$ we obtain
 \begin{align}
  X-\eta(X)\xi&=t^2X +t'nX, \label{A}\\  ntX + n'nX&=0,\label{B}\\ tt'Z+t'n'Z&=0,\label{C}\\ nt'Z+n'^2Z&=Z.\label{D}
 \end{align}
 From Eqs. \eqref{GrindEQ__2_1_} and \eqref{GrindEQ__2_8_}, we can write
 \begin{align*}
 &X-\eta(X)\xi =P_1X+P_2X,\\
 &\phi X =\phi(P_1X)+\phi(P_2X),\\
 &tX+nX =tP_1X+nP_1X+tP_2X+nP_2X
 \end{align*}
 for any $X \in \Gamma(TM)$. By comparing the tangential and the normal parts of last equation, we find
 \begin{align}\label{GrindEQ__4_15_}
 tX =tP_1X+tP_2X, \quad nX =nP_1X+nP_2X.
 \end{align}
For the  invariant distribution $\mathfrak{D}$ and the anti-invariant distribution $\mathfrak{D}^\bot$,
 we obtain that $ nP_1=0$ and $tP_2=0$. Thus from Eq. \eqref{GrindEQ__4_15_}, we have
 \begin{align*}t=tP_1,\, n =nP_2\end{align*}
 which gives
  \begin{align*} ntX=nP_2 tX=nP_2tP_1X=nt(P_1P_2)X=0,\,\forall X \in \Gamma(TM). \end{align*}
 
\noindent Conversely, suppose that $M$ be submanifold of a paracosymplectic manifold $\widetilde{M}$ such that $\xi\in\Gamma(TM)$ satisfying $nt=0$.
Then from Eq. \eqref{B}, we have
\begin{align}\label{GrindEQ__4_16_}
 n'n=0.\end{align} 
Employing Eqs. \eqref{GrindEQ__2_3_}, \eqref{B} and \eqref{GrindEQ__4_16_}, we obtain that $g(X,tt'Z)=0$ for any $X \in \Gamma(TM)$ and $Z \in \Gamma(TM^{\bot})$ which implies that $tt'=0.$ Therefore from Eq. \eqref{C}, we also have $t'n'=0$. Further, from Eqs. \eqref{A} and \eqref{D}, we get 
\begin{align}
t^3 = t, \quad n'^3 = n'. \label{GrindEQ__4_17_}
\end{align}
\noindent By substituting
\begin{align}
P_1=t^2 \,{\rm{and}}\,P_2 = Id-t^2, \label{GrindEQ__4_18_}
\end{align}
\noindent we achieve Eq. \eqref{GrindEQ__4_13_}, this implies $P_1$ and $P_2$ are orthogonal complementary projections defining distributions $\mathfrak{D}$ and $\mathfrak{D}^\bot$. From Eqs. \eqref{GrindEQ__4_17_} and \eqref{GrindEQ__4_18_}, we determine that $t =tP_1,\, tP_2=0,\, n = nP_2,\, nP_1=0\,\,{\rm and}\,\, P_2tP_1=0$. These implies that the distribution $\mathfrak{D}$ is invariant and the distribution $\mathfrak{D}^\bot$ is anti-invariant, and hence completes the proof of the theorem.
\end{proof}
\begin{theorem}$\label{thrm__4_5_}$
Let $M$ be a $\mathcal{P}\mathcal{R}$-semi-invariant submanifold of a paracosymplectic manifold $\widetilde{M}(\phi,\xi,\eta,g)$. Then $M$ is a $\mathcal{P}\mathcal{R}$-semi-invariant warped product ${F}\times {}_f{B}$ iff the shape operator of $M$ satisfies
\begin{align}\label{GrindEQ__4_19_}
A_{\phi X}U=-X(\mu)\phi U,\quad X \in \Gamma(\mathfrak{D}^\bot), U \in \Gamma(\mathfrak{D}) 
\end{align}
\noindent for some function $\mu$ on $M$ such that $W(\mu)=0$, $W \in \Gamma(\mathfrak{D})$.
\end{theorem}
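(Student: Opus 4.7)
The plan is to verify the forward direction directly from the lemma preceding Theorem~\ref{thrm__4_3_}, and to establish the converse by combining an integrability analysis with Hiepko's warped-product splitting theorem.

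For the ``only if'' part, suppose $M=F\times_{f}B$ is a $\mathcal{P}\mathcal{R}$-semi-invariant warped product with $\xi\in\Gamma(TF)$. Since $\mathfrak{D}^\bot$ is anti-invariant, $nX=\phi X$ for $X\in\Gamma(\mathfrak{D}^\bot)$; since $\mathfrak{D}$ is invariant, $tU=\phi U$ for $U\in\Gamma(\mathfrak{D})$. Formula \eqref{GrindEQ__4_4_} of the previous lemma then reads $A_{\phi X}U=-X(\ln f)\phi U$, so setting $\mu=\ln f$ yields the asserted identity. The condition $W(\mu)=0$ for $W\in\Gamma(\mathfrak{D})=\Gamma(TB)$ is automatic because $f$ is a function on the base factor~$F$.

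For the converse, I would first exploit the paracosymplectic relation $\widetilde\nabla\phi=0$ (a consequence of $\widetilde\nabla\Phi=0$ and $\widetilde\nabla g=0$) to translate the shape-operator hypothesis into statements about $\widetilde\nabla$. For $X,Y\in\Gamma(\mathfrak{D}^\bot)$ and $U\in\Gamma(\mathfrak{D})$ the chain
\[
g(\widetilde\nabla_{X}Y,U)=-g(\widetilde\nabla_{X}\phi Y,\phi U)=g(A_{\phi Y}X,\phi U)=g(X,A_{\phi Y}\phi U),
\]
together with the hypothesis applied to $\phi U\in\Gamma(\mathfrak{D})$ (using $\phi^{2}U=U$ since $\eta(U)=0$), gives $-Y(\mu)\,g(X,U)=0$. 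Combined with $\widetilde\nabla_{X}\xi=0$ and $h(X,\xi)=0$ from Lemma~\ref{lem__3_2_}, this shows that $\mathfrak{D}^\bot\oplus\langle\xi\rangle$ is integrable with totally geodesic leaves in $M$. Dually, for $U,V\in\Gamma(\mathfrak{D})$ and $X\in\Gamma(\mathfrak{D}^\bot)$ an analogous manipulation yields
\[
g(\widetilde\nabla_{U}V,X)=-g(h(U,\phi V),\phi X)=-g(A_{\phi X}U,\phi V)=-X(\mu)\,g(U,V),
\]
using $g(\phi U,\phi V)=-g(U,V)$ from \eqref{GrindEQ__2_2_}. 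Testing the same identity against $\xi$ in place of $X$ forces $\xi(\mu)=0$, so $\mathrm{grad}\,\mu\in\Gamma(\mathfrak{D}^\bot)$; integrability of $\mathfrak{D}$ follows from the symmetry of the last expression in $U,V$ together with $d\eta=0$. The identity also identifies the second fundamental form of the $\mathfrak{D}$-leaves in $M$ as $-g(U,V)\,\mathrm{grad}\,\mu$, so these leaves are totally umbilical with mean curvature vector $-\mathrm{grad}\,\mu$.

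To close the argument, observe that the mean curvature $-\mathrm{grad}\,\mu$ of the $\mathfrak{D}$-foliation is a gradient, so its musical dual $-d\mu$ is exact and hence closed; this is the sphericality condition. The pseudo-Riemannian analogue of Hiepko's splitting theorem therefore realises $M$ locally as a warped product $F\times_{f}B$ with $TF=\mathfrak{D}^\bot\oplus\langle\xi\rangle$, $TB=\mathfrak{D}$, and warping function $f=e^{\mu}$; the invariance of $\mathfrak{D}$ and anti-invariance of $\mathfrak{D}^\bot$ upgrade this to a $\mathcal{P}\mathcal{R}$-semi-invariant warped product. The step I expect to be the main obstacle is the careful handling of the $\xi$-channel---in particular, extracting $\xi(\mu)=0$ from the hypothesis (which only asserts $W(\mu)=0$ for $W\in\Gamma(\mathfrak{D})$) and ensuring that Hiepko's theorem is invoked in the pseudo-Riemannian setting, where the two foliations carry indefinite induced metrics.
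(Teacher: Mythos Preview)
Your proposal follows the paper's own route: the forward direction via formula \eqref{GrindEQ__4_4_} with $\mu=\ln f$, and the converse by showing that the anti-invariant factor has totally geodesic leaves while the $\mathfrak{D}$-leaves are totally umbilical and spherical, then invoking Hiepko's theorem. You are in fact more thorough than the paper, which in the converse cites the warped-product-only identities \eqref{GrindEQ__4_4_} and \eqref{GrindEQ__4_9_} rather than the hypothesis \eqref{GrindEQ__4_19_} directly, treats only $\mathfrak{D}^\bot$ rather than $\mathfrak{D}^\bot\oplus\langle\xi\rangle$, and simply asserts integrability of $\mathfrak{D}$ and parallelism of the mean curvature without argument.

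The one step that does not go through as written is your claim that ``testing the same identity against $\xi$ in place of $X$ forces $\xi(\mu)=0$''. The hypothesis \eqref{GrindEQ__4_19_} applies only for $X\in\Gamma(\mathfrak{D}^\bot)$, so you cannot substitute $X=\xi$ on the right-hand side; and your chain $g(\widetilde\nabla_UV,X)=-g(h(U,\phi V),\phi X)$ collapses to $0=0$ when $X=\xi$ because $\phi\xi=0$. What the $\xi$-test actually yields (via $\widetilde\nabla\eta=0$) is $g(\nabla_UV,\xi)=0$, i.e.\ the $\xi$-component of the second fundamental form of the $\mathfrak{D}$-leaves vanishes; it does not pin down $\xi(\mu)$. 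This does not destroy the argument---the $\mathfrak{D}$-leaves are still totally umbilical, now with mean curvature the $\mathfrak{D}^\bot$-projection of $-\mathrm{grad}\,\mu$, and Hiepko still applies---but the warping function need not be exactly $e^\mu$. You were right to flag the $\xi$-channel as the delicate point; the paper's proof simply does not address it.
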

\begin{proof}
 Let $M = {F}\times {}_f{B}$ be a $\mathcal{P}\mathcal{R}$-semi-invariant warped product submanifold of a paracosymplectic manifold $\widetilde{M}$ then from Eq.\eqref{GrindEQ__4_4_}, we obtain that $A_{\phi X}U=-X(\ln f)\phi U$ for any $X \in \Gamma(\mathfrak{D}^\bot)$ and $U \in \Gamma(\mathfrak{D})$. Since $f$ is a function on the first factor $F$, putting $\mu=\ln f$ implies that $W(\mu)=0$ for all $W \in \Gamma(\mathfrak{D}).$ Conversely, assume that $M$ satisfies \eqref{GrindEQ__4_19_} for some function $\mu$ on $M$ with $W(\mu)=0$, for all $W \in \Gamma(\mathfrak{D})$. By the virtue of Eqs. \eqref{GrindEQ__2_3_}, \eqref{GrindEQ__2_4_}-\eqref{GrindEQ__2_6_} and \eqref{GrindEQ__4_9_}, we have
 \begin{align*}
 g(\nabla_XY, \phi V)=g(\widetilde\nabla_XY,\phi V)=-g(\widetilde\nabla_X\phi Y,V)=-g(A_{\phi Y}X,V)=0,
 \end{align*}
 for any $X,Y \in \Gamma(\mathfrak{D}^\bot)$ and $V \in \Gamma(\mathfrak{D})$. Therefore the distribution $\mathfrak{D}^\bot$ is totally geodesic.  On the other hand from Eqs.\eqref{GrindEQ__2_2_} and \eqref{GrindEQ__4_4_}, we get 
\begin{align*}
 g(\nabla_UV, X)&=g(\widetilde\nabla_UV,X)=-g(V,\widetilde\nabla_UX)=g(\phi V,\widetilde\nabla_U{\phi X})\\
 &=-g(\phi V,A_{nX}U)=X(\mu)g(\phi V,\phi U)=-X(\mu)g(U,V),
 \end{align*}
\noindent for any $U,V \in \Gamma(\mathfrak{D})$, where $\mu = \ln f$. Thus, the integrable manifold of $\mathfrak{D}$ is totally umbilical submanifold in $M$ and its mean curvature is non-zero and parallel by using the  facts that the distribution $\mathfrak{D}$ of $M$ is always integrable and $W(\mu)=0$ for all $W \in \Gamma(TB)$, and hence  completes the proof of the theorem.
\end{proof}

\noindent Now, we prove the following result:
\begin{theorem}$\label{thrm__4_6_}$
Let $M\to \widetilde{M} $ be an isometric immersion of a pseudo-Riemannian manifold $M$ into a paracosymplectic manifold $\widetilde{M}(\phi,\xi,\eta,g)$. Then a necessary and sufficient condition for $M$ to be a $\mathcal{P}\mathcal{R}$-semi-invariant warped product ${B}\times_f{F}$ submanifold is that the shape operator of $M$ satisfies
\begin{align}\label{eq4.6}
A_{\phi Z}X=-\phi X (\mu)Z,\,X \in \Gamma(\mathfrak{D}\oplus<\xi>), Z \in \Gamma(\mathfrak{D}^{\bot}), 
\end{align}
for some function $\mu$ on $M$ such that $V(\mu)=0$, $V \in \Gamma(\mathfrak{D}^{\bot})$.
\end{theorem}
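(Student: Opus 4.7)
The plan is to prove the two directions separately, mirroring the structure of Theorem \ref{thrm__4_5_} but with the roles of the invariant and anti-invariant factors interchanged. For the necessity, I would first observe that Proposition \ref{thrm__4_2_} forces $\xi\in\Gamma(TB)=\Gamma(\mathfrak{D}\oplus\langle\xi\rangle)$ in a warped product $B\times_{f}F$, so that $TF=\mathfrak{D}^{\bot}$ and Lemma \ref{lem__3_6_} applies verbatim with $tX=\phi X$ on $\mathfrak{D}\oplus\langle\xi\rangle$. Setting $\mu=\ln f$, which automatically satisfies $V(\mu)=0$ for $V\in\Gamma(\mathfrak{D}^{\bot})$ since $f$ is lifted from $B$, I would verify \eqref{eq4.6} by testing $g(A_{\phi Z}X,\cdot)$ against the three orthogonal components of $TM$: the $\mathfrak{D}^{\bot}$-piece follows immediately from \eqref{GrindEQ__3_8_}; the $\xi$-piece vanishes by \eqref{GrindEQ__3_3_}; and for a generic $W\in\Gamma(\mathfrak{D})$, writing $W=\phi W^{\prime}$ and using $\widetilde{\nabla}\phi=0$ to obtain $h(X,\phi W^{\prime})=n^{\prime}h(X,W^{\prime})$, the identity \eqref{GrindEQ__2_2_} together with $\eta(h(X,W^{\prime}))=\eta(Z)=0$ collapses $g(h(X,\phi W^{\prime}),\phi Z)$ to $-g(h(X,W^{\prime}),Z)=0$.

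For the sufficiency, assuming \eqref{eq4.6} with $V(\mu)=0$ on $\mathfrak{D}^{\bot}$, the plan is to verify the two geometric ingredients required by a Hiepko-type warped product decomposition. First, to show that $\mathfrak{D}\oplus\langle\xi\rangle$ is totally geodesic in $M$: for $X,Y\in\Gamma(\mathfrak{D})$ and $Z\in\Gamma(\mathfrak{D}^{\bot})$, the $\phi$-trick $g(\widetilde{\nabla}_{X}Y,Z)=-g(\widetilde{\nabla}_{X}\phi Y,\phi Z)$ based on \eqref{GrindEQ__2_2_} and $\widetilde{\nabla}\phi=0$ together with tangent-normal orthogonality reduces the quantity to $-g(A_{\phi Z}X,\phi Y)$, which by \eqref{eq4.6} equals $\phi X(\mu)\,g(Z,\phi Y)=0$ since $\phi Y\in\Gamma(\mathfrak{D})$ is orthogonal to $Z$; the $\xi$-slot is handled by $\nabla_{X}\xi=0$ from Lemma \ref{lem__3_2_}.

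Second, to show that $\mathfrak{D}^{\bot}$ is integrable with totally umbilical leaves: for $Z,W\in\Gamma(\mathfrak{D}^{\bot})$ and $Y\in\Gamma(\mathfrak{D})$, the same $\phi$-trick together with Weingarten's formula yields $g(\nabla_{Z}W,Y)=g(A_{\phi W}\phi Y,Z)$, and applying \eqref{eq4.6} to $\phi Y\in\Gamma(\mathfrak{D})$ with $\phi^{2}Y=Y$ (since $\eta(Y)=0$) produces the clean formula $g(\nabla_{Z}W,Y)=-Y(\mu)\,g(Z,W)$. Symmetry of the right-hand side in $Z,W$ forces $[Z,W]\in\Gamma(\mathfrak{D}^{\bot})$, giving integrability; combined with $g(\nabla_{Z}W,\xi)=0$ from Lemma \ref{lem__3_2_}, this identifies the second fundamental form of a $\mathfrak{D}^{\bot}$-leaf inside $M$ as $-g(Z,W)\,\mathrm{grad}\,\mu$, so the leaves are totally umbilical. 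A Hiepko-type decomposition theorem, combined with the hypothesis that $\mu$ is constant along $\mathfrak{D}^{\bot}$, then identifies $M$ locally with $B\times_{f}F$ where $f=e^{\mu}$.

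The main obstacle I anticipate is the closing step of the converse: one must ensure that the mean curvature of the $\mathfrak{D}^{\bot}$-fibers is genuinely the gradient of a basic function on the $B$-factor, not merely on the ambient $M$. Since the hypothesis only records $V(\mu)=0$ for $V\in\Gamma(\mathfrak{D}^{\bot})$, the delicate point concerns the $\xi$-direction; the relations $\nabla_{Z}\xi=0$, $h(Z,\xi)=0$, and $\phi\xi=0$ must be exploited throughout to ensure $\xi$ does not spoil the warped-product factorization, exactly as in the analogous step of the proof of Theorem \ref{thrm__4_5_}.
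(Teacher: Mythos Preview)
Your proposal is correct and follows essentially the same route as the paper: necessity is obtained from Lemma~\ref{lem__3_6_} (specifically Eq.~\eqref{GrindEQ__3_8_}) with $\mu=\ln f$, and sufficiency is established by showing that $\mathfrak{D}\oplus\langle\xi\rangle$ has totally geodesic leaves via the $\phi$-trick, that the $\mathfrak{D}^{\bot}$-leaves are totally umbilical extrinsic spheres with second fundamental form proportional to $g(Z,W)\,\mathrm{grad}\,\mu$, and then invoking Hiepko's decomposition theorem \cite{SH}. Your treatment of necessity is in fact more careful than the paper's, which only cites \eqref{GrindEQ__3_8_} to handle the $\mathfrak{D}^{\bot}$-component of $A_{\phi Z}X$, whereas you explicitly verify the vanishing of the $\xi$- and $\mathfrak{D}$-components as well.
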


\begin{proof}
 Let $M = {B}\times_f{F}$ be a $\mathcal{P}\mathcal{R}$-semi-invariant warped product submanifold of a paracosymplectic manifold $\widetilde{M}$ such that $\xi\in\Gamma(TB)$. Then from Eq. \eqref{GrindEQ__3_8_}, we accomplish that $g(A_{\phi Z}W,X)=-(\phi X \ln f)g(W,Z)$ which implies Eq. \eqref{eq4.6}. Since $f$ is a function on $B$, we also have $V(\ln f)=0$ for all $V \in \Gamma(\mathfrak{D}^{\bot})$. Conversely, suppose that $M$ satisfies Eq. \eqref{eq4.6} for some function $\mu$ with $V(\mu)=0$ for all $V \in \Gamma(\mathfrak{D}^{\bot})$. Then we have
 \begin{align}\label{gh}
 g(h(X,Y),\phi Z)=0,
 \end{align}
 by use of Eqs. \eqref{GrindEQ__2_3_}, \eqref{GrindEQ__2_5_} and the fact that $\widetilde{M}$ is a paracosymplectic manifold, we attain that
 \begin{align}\label{gh1}
 g(\widetilde\nabla_{X}\phi Y,\phi Z)=g(\phi\widetilde\nabla_{X}Y, \phi Z)=-g(\widetilde\nabla_{X}Y, Z)=-g(\nabla_{X}Y, Z)=0,\end{align}
 for any $X,\,Y \in \Gamma(\mathfrak{D}\oplus<\xi>), Z \in \Gamma(\mathfrak{D}^{\bot})$. This means that the distribution $(\mathfrak{D}\oplus<\xi>)$ is integrable and its leaves are totally geodesic in $M$. On the other hand, let $F$ be a leaf of $\mathfrak{D}^{\bot}$ and $\mathfrak{h}$ be the second fundamental form of the immersion of $F$ into
$M$ then for any $Z ,W\in \Gamma(\mathfrak{D}^{\bot})$, we obtain by using Eqs. \eqref{GrindEQ__2_4_}, \eqref{GrindEQ__2_7_} and \eqref{eq4.6} that
\begin{align}\label{eq4.6.0}
\mathfrak{h}(Z,W)=g(Z, W)\nabla\mu,
\end{align}
where $\nabla\mu$ is the gradient of the function $\mu$. Then it follows from \eqref{eq4.6.0} that the
leaves of $\mathcal{D}^{\bot}$ are totally umbilical in $M$. Also, for any $V \in \Gamma(\mathfrak{D}^{\bot})$ , we have $V(\mu)=0$, which implies that the integral manifold of $\mathfrak{D}^{\bot}$ is an extrinsic sphere in $M$, i.e., a totally umbilical submanifold with parallel mean curvature vector. Thus, by \cite{SH} we achieve that $M$ is a $\mathcal{P}\mathcal{R}$-semi-invariant submanifold of a paracosymplectic manifold $\widetilde{M}$. This completes the proof of the theorem.
\end{proof}
\section{Example}\label{ex}
In this section, we present an example for a $\mathcal{P}\mathcal{R}$-semi-invariant submanifold of a paracosymplectic manifold in the form $F\times_{f} B$:

\begin{example}
 Let $\widetilde M=\mathbb{R}^4\times\mathbb{R}_{+}\subset\mathbb{R}^5$ be a $5$-dimensional manifold with the standard Cartesian coordinates $(x_1,x_2,y_1,y_2,t)$. Define the paracosymplectic pseudo-Riemannian metric structure $(\phi,\xi,\eta, g)$ on $\widetilde M$ by 
 \begin{align}
 &\phi e_{1}=e_{3},\ \phi e_{2}=e_{4},\ \phi e_{3}=e_{1},\  
 \phi e_{4}=e_{2},\ \phi e_{5}=0, \label{strcex2.1}\\
  &\xi=e_{5},\ \eta =dt,\  g=\sum_{i=1}^{2}(dx_{i})^{2}-\sum_{j=1}^{2}(dy_{j})^{2}+\eta\otimes\eta.\label{strcex2.2} 
 \end{align}
Here, $\left\{e_1,e_2,e_3,e_4,e_5\right\}$ is a local orthonormal frame for $\Gamma(T\widetilde M)$.
Let $M$ is an isometrically immersed pseudo-Riemannian submanifold of a paracosymplectic manifold $\widetilde M$ given by
\begin{align*}
 \Omega(v,\theta,\beta,u)=(v\tan\theta,v\tan\beta,v\sec\theta,v\sec\beta,u),
\end{align*}
 where $\theta \in (0,\pi/2)$, $\beta \in (0,\pi/2 )$ and $v$ is non-zero.
 Then the tangent bundle of $M$ is spanned by the vectors
 \begin{align}\label{tanbundl} 
 X_{1}&=\tan(\theta) e_{1}+\tan(\beta) e_{2}+\sec(\theta) e_{3}+\sec(\beta) e_{4}, \nonumber\\
 X_{2}&=v\sec^{2}(\theta) e_{1}+v\sec(\theta)\tan(\theta) e_{3}, \\
 X_{3}&=v\sec^{2}(\beta)e_{2}+v\sec(\beta)\tan(\beta)e_{4},\, X_{4}=e_{5}.\nonumber 
   \end{align}
 The space $\phi(TM)$ with respect to the paracosymplectic pseudo-Riemannian metric structure $(\phi,\xi,\eta, g)$ of $\widetilde M$ becomes
  \begin{align}\label{norbndl} 
\phi(X_{1})&=\sec(\theta)e_{1}+\sec(\beta)e_{2}+\tan(\theta)e_{3}+\tan(\beta)e_{4}, \nonumber\\
 \phi(X_{2})&=v\sec(\theta)\tan(\theta)e_{1}+v\sec^{2}(\theta)e_{3}, \\
 \phi(X_{3})&=v\sec(\beta)\tan(\beta)e_{2}+v\sec^{2}(\beta)e_{4},\,\phi(X_{4})=0. \nonumber
   \end{align}
 From Eqs.\eqref{tanbundl} and \eqref{norbndl} we obtain that $\phi(X_{4})$ is orthogonal to $M$, and $\phi(X_{1})$, $\phi(X_{2})$, $\phi(X_{3})$ are tangent to $M$. So $\mathfrak{D}^{\bot}$ and $\mathfrak{D}$ can be taken as a subspace span\{$X_{4}$\} and a subspace span\{$X_{1}, X_{2}, X_{3}$\} respectively, where $\xi = X_{4}$ for $\phi(X_{4})=0$ and $\eta({X_{4}})=1$. Therefore, $M$ becomes a $\mathcal{P}\mathcal{R}$-semi-invariant submanifold. Further, the induced pseudo-Riemannian metric tensor $g$ of $M$ is given by
 \begin{align*}
[g(e_{i},e_{j})]=\begin{bmatrix}
                         -2&0&0&0\\
                         0&v^{2}\sec^{2}\theta&0&0\\
                         0&0&v^{2}\sec^{2}\beta&0\\
                         0&0&0& 1
\end{bmatrix},
\end{align*}
that is, 
\begin{align*} g=du^{2}+v^{2}\{\sec^{2}(\theta) (d\theta)^{2}+ \sec^{2}(\beta) (d\beta)^{2}-(2/v^{2})dv^{2}\}=g_{F}+v^{2}g_{B}.\end{align*}
 \end{example}
 \noindent Hence, $M$ is a $4$-dimensional $\mathcal{P}\mathcal{R}$-semi-invariant warped product submanifold of $\widetilde{M}$ with warping function $f=v^{2}$.

\end{document}